\newcommand{\la}{\lambda}
\newcommand\ep{\varepsilon}
\newcommand\ga{\gamma}
\renewcommand{\phi}{\varphi}
\newcommand{\R}{{\mathbb R}}
\newcommand{\C}{{\mathbb C}} 
\newcommand{\N}{{\mathbb N}} 
\newcommand{\be}{\begin{equation}}
\newcommand{\ee}{\end{equation}}
\newcommand{\co}{\colon}
\newcommand{\G}{\Gamma}
\newcommand{\KM}{K^{}_{\!M}}
\newcommand{\KE}{K^{}_{\!E}}
\newcommand{\vol}{\operatorname{vol}}
\newcommand{\tr}{\operatorname{trace}}
\newcommand{\Hom}{\operatorname{Hom}}
\def\til{\widetilde}
\def\wt{\widetilde}
\def \bfo {\begin {eqnarray*}}
\def \efo {\end {eqnarray*} }
\def \beq {\begin {eqnarray}}
\def \eeq {\end {eqnarray}}
\def\tprod#1{\langle\!\langle #1\rangle\!\rangle}
\def\tnorm#1{|\!|#1|\!|}
\theoremstyle{definition}
\newtheorem{definition}{Definition}[section]
\newtheorem{notation}[definition]{Notation}
\newtheorem{remark}[definition]{Remark}
\theoremstyle{plain}
\newtheorem{theorem}{Theorem} 
\newtheorem{lemma}[definition]{Lemma} 
\newtheorem*{sublemma}{Sublemma} 
\newtheorem{proposition}[definition]{Proposition} 
\newtheorem{corollary}[definition]{Corollary}
\numberwithin{equation}{section}
\begin{document} 

\title{Approximations of the connection Laplacian spectra}

\author[D. Burago]{Dmitri Burago}                                                          
\address{Dmitri Burago: Pennsylvania State University,                          
Department of Mathematics, University Park, PA 16802, USA}                      
\email{burago@math.psu.edu}                                                     
                                                                                
\author[S. Ivanov]{Sergei Ivanov}
\address{Sergei Ivanov:
St.Petersburg Department of Steklov Mathematical Institute,
Russian Academy of Sciences,
Fontanka 27, St.Petersburg 191023, Russia}
\email{svivanov@pdmi.ras.ru}

\author[Y. Kurylev]{\framebox{Yaroslav Kurylev}}        
\address{Yaroslav Kurylev}                                                  

\author[J. Lu]{Jinpeng Lu}
\address{Jinpeng Lu: Department of Mathematics and Statistics, University of Helsinki, FI-00014 Helsinki, Finland} 
\email{jinpeng.lu@helsinki.fi}


\thanks{The first author was partially supported
by NSF grant DMS-1205597.
The second author was partially supported by
RFBR grant 20-01-00070.
The fourth author was partially supported by Finnish Centre of Excellence in Inverse Modelling and Imaging.
}

\keywords{connection Laplacian, spectral convergence, discretization.}

\subjclass[2010]{58C40, 58J60, 53C21, 65J10}

\begin{abstract}
We consider a convolution-type operator on vector bundles over metric-measure spaces. This operator extends the analogous convolution Laplacian on functions in our earlier work to vector bundles, and is a natural extension of the graph connection Laplacian. We prove that for Euclidean or Hermitian connections on closed Riemannian manifolds, the spectrum of this operator and that of the graph connection Laplacian both approximate the spectrum of the connection Laplacian.
\end{abstract}

\date{}

\maketitle 

\section{Introduction}
\label{sec:introduction}

This paper is a continuation of our previous works where we approximated, in the spectral sense, the Riemannian Laplace-Beltrami operator with the discrete graph Laplacian \cite{BIK1,L} and a convolution-type operator \cite{BIK2}. This convolution-type operator, called the \emph{$\rho$-Laplacian} (with a small parameter $\rho>0$), is defined by averaging over metric balls of small radius, and it is a natural extension of the discrete graph Laplacian in a continuous setting. A notable feature of the $\rho$-Laplacian is that it is not based on differentiation and is readily available on general metric-measure spaces. Furthermore, we proved in \cite{BIK2} that the spectrum of the $\rho$-Laplacian enjoys stability under metric-measure approximations in a large class of metric-measure spaces. Ideally, we define the $\rho$-Laplacian as a notion of Laplacian on metric-measure spaces (in the spectral sense). Our earlier results in \cite{BIK1,BIK2,L} show that the definition makes sense for Riemannian manifolds. We hope that the spectra of the $\rho$-Laplacians could converge as $\rho\to 0$, in a large class of metric-measure spaces, with the limit related to known concepts of Laplacian in \cite{G1,G2}.

The present paper is concerned with the connection Laplacian on vector bundles. In this paper, we introduce an analogous convolution Laplacian acting on vector bundles over metric-measure spaces. This operator can be regarded as a generalization of the $\rho$-Laplacian (on functions), and its discretization, also known as the \emph{graph connection Laplacian}, is a generalization of the graph Laplacian. We prove that for Euclidean or Hermitian connections on closed Riemannian manifolds, our convolution Laplacian and its discretization both approximate the standard connection Laplacian in the spectral sense. The spectral convergence of the graph connection Laplacians may have applications in numerical computations and manifold learning, in particular analyzing high-dimensional data sets, see e.g. \cite{BN,TGHS,JH,KW,SW,SW2,WR} and the references therein.

\medskip
In this introduction, we define our operator for vector bundles over Riemannian manifolds. The general definition for metric-measure spaces can be found in Section \ref{sec:general}. 
Let $M^n$ be a compact, connected Riemannian manifold of dimension $n$ without boundary, and let $E$ be a smooth Euclidean (or Hermitian) vector bundle over $M$ equipped with a smooth Euclidean (or Hermitian) connection $\nabla$. 
Recall that an Euclidean (resp. Hermitian) connection is a connection that is compatible with the Euclidean (resp. Hermitian) metric on the vector bundle.
We denote by $L^2(M,E)$ the space of $L^2$-sections of the vector bundle $E$, and by $E_x$ the fiber over a point $x\in M$.
Fix $\rho>0$ smaller than the injectivity radius $r_{inj}(M)$.
Given any pair of points $x,y\in M$ with $d(x,y)\leq\rho$, let $P_{xy}:E_y\to E_x$ be the parallel transport canonically associated with $\nabla$ from $y$ to $x$ along the unique minimizing geodesic $[yx]$. 

For an $L^2$-section $u\in L^2(M,E)$, we define the \emph{$\rho$-connection Laplacian} operator $\Delta^{\rho}$ by
\begin{equation}\label{sec1-def-rhoLaplacian}
\Delta^\rho u(x)
  = \frac{2(n+2)}{\nu_n\rho^{n+2}} \int_{B_\rho(x)} \big(u(x)-P_{xy}(u(y)) \big)\,dy,
\end{equation}
where $\nu_n$ is the volume of the unit ball in $\mathbb{R}^n$, and $B_{\rho}(x)$ is the geodesic ball in $M$ of radius $\rho$ centered at $x\in M$.

The operator $\Delta^{\rho}$ is nonnegative and self-adjoint with respect to the standard inner product on $L^2(M,E)$. Furthermore, the lower part of the spectrum of $\Delta^{\rho}$ is discrete. We denote by $\til\la_k$ the $k$-th eigenvalue of $\Delta^{\rho}$ from the discrete part of the spectrum. Denote by $\Delta$ the standard connection Laplacian of the connection $\nabla$, and by $\la_k$ the $k$-th eigenvalue of $\Delta$. Our first result states that the spectrum of the $\rho$-connection Laplacian $\Delta^{\rho}$ approximates the spectrum of the connection Laplacian $\Delta$.

\begin{theorem}\label{t:smoothcase}
There exist constants $C_n>1$ and $c_n,\sigma_n\in (0,1]$, depending only on $n$, such that the following holds.
Suppose that the absolute value of the sectional curvatures of $M$ 
and the norm of the curvature tensor of $\nabla$ are bounded by
constants $\KM$ and $\KE$, respectively. 
Assume that $\rho>0$ satisfies
\be\label{e:rho-bound}
  \rho<\min \big\{ r_{inj}(M), c_n K_M^{-1/2} \big\}.
\ee
Then for every $k\in \N_+$ satisfying $\til\lambda_k\le \sigma_n \rho^{-2}$, we have
$$
  \big| \til\la_k^{1/2} - \la_k^{1/2} \big| \le
   \big(C_n\KM+\lambda_k \big)\lambda_k^{1/2} \rho^2 + C_n\KE\rho \, .
$$
\end{theorem}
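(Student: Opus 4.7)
I would compare the quadratic forms $Q(u)=\int_M|\nabla u|^2=\langle\Delta u,u\rangle$ and $Q^\rho(u)=\langle\Delta^\rho u,u\rangle$, and apply the min--max principle for eigenvalues of nonnegative self-adjoint operators. The Euclidean/Hermitian assumption gives $P_{xy}^*=P_{yx}$, which symmetrizes $Q^\rho$ to
$$Q^\rho(u)=\frac{n+2}{\nu_n\rho^{n+2}}\iint_{d(x,y)\le\rho}|u(x)-P_{xy}u(y)|^2\,dx\,dy.$$

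Next I would Taylor-expand in normal coordinates at $x$: if $y=\exp_x(v)$ and $u$ is smooth, parallel transport along the radial geodesic gives
$$P_{xy}u(y)-u(x)=\nabla_v u+\tfrac12\nabla^2u(v,v)+\tfrac16\nabla^3u(v,v,v)+O(|v|^4\|u\|_{C^4}),$$
while $dy=\bigl(1-\tfrac16\operatorname{Ric}(v,v)+O(\KM^2|v|^4)\bigr)\,dv$. Squaring and integrating over $B_\rho(0)\subset T_xM$, the odd-in-$v$ cross term $\langle\nabla_v u,\nabla^2u(v,v)\rangle$ drops by symmetry, and the second-moment identity $\int_{B_\rho}v_iv_j\,dv=\frac{\nu_n\rho^{n+2}}{n+2}\delta_{ij}$ recovers $|\nabla u(x)|^2$ as the leading contribution. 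The $O(\rho^2)$ corrections split into: (a) $\tfrac14|\nabla^2u(v,v)|^2$, bounded pointwise by $\rho^2|\nabla^2u|^2$; (b) the Ricci piece of the Jacobian, bounded by $\KM\rho^2|\nabla u|^2$; and (c) the cross term $\langle\nabla_v u,\nabla^3u(v,v,v)\rangle$, which after integrating in $x$ and commuting derivatives via $[\nabla_i,\nabla_j]u=R^E(e_i,e_j)u$ contributes an $L^2$ piece bounded by $\rho^2\|\nabla^2u\|^2$ plus a bundle-curvature piece bounded by $\KE\rho\,\|u\|\,\|\nabla u\|$. The resulting estimate reads
$$\bigl|Q^\rho(u)-Q(u)\bigr|\le C_n\rho^2\bigl(\KM\|\nabla u\|^2+\|\nabla^2u\|^2\bigr)+C_n\KE\rho\,\|u\|\,\|\nabla u\|$$
for all smooth sections $u$.

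For the upper bound on $\til\lambda_k$, I would apply min--max to $V_k=\operatorname{span}(u_1,\dots,u_k)$ spanned by the first $k$ eigensections of $\Delta$. Bochner-type bounds give $\|\nabla u_j\|^2=\lambda_j\|u_j\|^2$ and $\|\nabla^2u_j\|^2\le(\lambda_j^2+C_n\KM\lambda_j)\|u_j\|^2$, so the quadratic-form error on $V_k$ is at most $(C_n\KM+\lambda_k)\lambda_k\rho^2+C_n\KE\rho\,\lambda_k^{1/2}$, giving $\til\lambda_k-\lambda_k$ below this quantity. The reverse inequality uses a $k$-dimensional test space built from eigensections of $\Delta^\rho$, first regularized (for example by a second $P_{xy}$-averaging at scale $\rho$, or by $e^{-\rho^2\Delta^\rho}$) so as to land in $H^1$; the hypothesis $\til\lambda_k\le\rho^{-2}$ keeps the regularization error controlled and ensures that $Q$ of the smoothed section is dominated by $Q^\rho$ of the original, modulo the same types of errors. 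Combining the two directions and dividing the resulting bound on $|\til\lambda_k-\lambda_k|$ by $\til\lambda_k^{1/2}+\lambda_k^{1/2}\ge\lambda_k^{1/2}$ produces the claimed inequality.

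The main obstacle is the cross term (c), where the antisymmetric part of $\nabla^3u$ brings in $R^E$ and an honest integration by parts only gains a single factor of $\rho$; this is the source of the $\KE\rho$ (rather than $\KE\rho^2$) contribution in the final bound. The lower bound is also delicate since $\Delta^\rho$-eigensections lack a priori regularity, but this is standard once the quadratic-form comparison above is in place.
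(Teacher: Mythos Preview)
Your upper-bound strategy (Taylor expand $|u(x)-P_{xy}u(y)|^2$ on the span of $\Delta$-eigensections and feed the result into min--max) is viable, but it is not what the paper does, and it needs more regularity than necessary. The paper obtains the one-sided inequality $D^\rho(u)\le J_{\max}\frac{\nu_n}{n+2}\rho^{n+2}\|\nabla u\|_{L^2}^2$ for \emph{every} $u\in H^1(M,E)$ by writing $\Gamma_{xy}(u)=-\int_0^1 P_{\gamma(0)\gamma(t)}\nabla_{\dot\gamma(t)}u\,dt$ along the geodesic, applying Cauchy--Schwarz, and using invariance of the Liouville measure under the geodesic flow together with the second-moment identity you quoted. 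No second derivatives of $u$, no integration by parts, and no $K_E$ term appear; one gets $\til\lambda_k\le(1+C_n K_M\rho^2)\lambda_k$ directly.

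The real gap is in your lower bound. Your whole scheme rests on the two-sided estimate
\[
\bigl|Q^\rho(u)-Q(u)\bigr|\le C_n\rho^2\bigl(K_M\|\nabla u\|^2+\|\nabla^2 u\|^2\bigr)+C_nK_E\rho\,\|u\|\,\|\nabla u\|,
\]
which needs control of $\|\nabla^2 u\|$. After smoothing a $\Delta^\rho$-eigensection $\til u$ by a $P_{xy}$-averaging $w=I\til u$, the best one can say is $\|\nabla^2 w\|_{L^2}^2\lesssim \rho^{-n-4}D^\rho(\til u)\lesssim \rho^{-2}\til\lambda_k\|\til u\|^2$ (differentiate the kernel twice and reinsert $\Gamma_{xy}$). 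Hence the error $\rho^2\|\nabla^2 w\|^2$ in your comparison is of order $\til\lambda_k\|\til u\|^2$, i.e.\ the \emph{same size as the main term}, and the argument gives no information. Iterating $I$ does not help: each extra derivative still costs a factor $\rho^{-1}$. Your alternative regularizer $e^{-\rho^2\Delta^\rho}$ does not smooth at all, since $\Delta^\rho$ is a bounded operator on $L^2$ and its semigroup stays in $L^2$.

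The paper avoids this by never comparing $Q$ and $Q^\rho$ on the same section. Instead it bounds $\|\nabla Iu\|_{L^2}$ \emph{directly} in terms of $D^\rho(u)$: differentiate $Iu(x)=\int\widetilde k_\rho(x,y)P_{xy}u(y)\,dy$ under the integral, use $\int d_x\widetilde k_\rho(x,y)\,dy=0$ to replace $P_{xy}u(y)$ by $-\Gamma_{xy}(u)$, and estimate the resulting integral of $d_xk_\rho\otimes\Gamma_{xy}(u)$ via a Cauchy--Schwarz/second-moment sublemma in each fiber. The derivative of $P_{xy}$ in $x$ is handled by a short holonomy estimate (Lemma~\ref{l:parallel}) and produces the $C_nK_E\rho\|u\|$ term. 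The outcome is
\[
\|\nabla Iu\|_{L^2}\le (1+C_nK_M\rho^2)\sqrt{\tfrac{n+2}{\nu_n\rho^{n+2}}D^\rho(u)}+C_nK_E\rho\,\|u\|_{L^2},
\]
valid for all $u\in L^2$; combined with $\|Iu\|_{L^2}^2\ge(1-C_nK_M\rho^2-\tfrac12\rho^2\til\lambda_k)\|u\|_{L^2}^2$ and min--max, this gives the lower bound. The point is that this estimate requires only $D^\rho(u)$, never $\|\nabla^2 u\|$, so the lack of regularity of $\Delta^\rho$-eigensections is irrelevant.
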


\begin{remark}
One can track the dependence of $c_n$ on $n$ and find that
it suffices to assume
$$
 \frac{\sinh^{n-1} c_n}{\sin^{n-1} c_n} < 2.
$$
\end{remark}

\begin{remark}
In the case of $E$ being the trivial bundle, the connection Laplacian is simply the Laplace-Beltrami operator on functions, and the $\rho$-connection Laplacian $\Delta^{\rho}$ reduces to an operator on functions
$$\Delta^\rho f(x)
  = \frac{2(n+2)}{\nu_n\rho^{n+2}} \int_{B_\rho(x)} \big(f(x)-f(y) \big)\,dy.$$
This operator is the $\rho$-Laplacian we introduced in \cite{BIK2} up to a normalization adjustment, and its discretization is the graph Laplacian studied in \cite{BIK1}. In this case, Theorem \ref{t:smoothcase} reduces to the convergence of the spectra of $\rho$-Laplacians (on functions) to the spectrum of the Laplace-Beltrami operator, which is known from Theorem 1 in \cite{BIK1} and Theorem 1.2 in \cite{BIK2}.
\end{remark}

\smallskip
Now let us turn to the discrete side. We define a discretization of a compact Riemannian manifold $M$ as follows (see \cite{BIK1}).
\begin{definition}\label{def-graph}
Let $\varepsilon\ll\rho$ and $X_{\varepsilon}=\{x_i\}_{i=1}^N$ be a finite $\varepsilon$-net in $M$. The distance function on $X_{\varepsilon}$ is the Riemannian distance $d$ of $M$ restricted onto $X_{\varepsilon}\times X_{\varepsilon}$, denoted by $d|_{X_{\varepsilon}}$. Suppose that $X_{\varepsilon}$ is equipped with a discrete measure $\mu=\sum_{i=1}^N \mu_i\delta_{x_i}$ which approximates the volume on $M$ in the following sense: there exists a partition of $M$ into measurable subsets $\{V_i\}_{i=1}^N$ such that $V_i\subset B_{\varepsilon}(x_i)$ and $\vol(V_i)=\mu_i$ for every $i$. Denote this discrete metric-measure space by $\Gamma_{\ep}=(X_{\varepsilon},d|_{X_{\varepsilon}},\mu)$, and we write $\Gamma$ for short.
\end{definition}
Let $M,E,\nabla$ be defined as before, and we consider the $\rho$-connection Laplacian on this discrete metric-measure space $\Gamma$, acting on the restriction of the vector bundle $E$ onto $X_{\varepsilon}$. Namely, let $P=\{P_{x_i x_j}: d(x_i,x_j)<\rho\}$ be the parallel transport between points in $X_{\ep}$. We call $P$ a \emph{$\rho$-connection} on the restriction $E|_{X_{\varepsilon}}$. For $\bar{u}\in L^2(X_{\varepsilon},E|_{X_{\varepsilon}})$, we define
\begin{equation}\label{sec1-graph-connection}
\Delta^{\rho}_{\Gamma} \bar{u}(x_i):=\frac{2(n+2)}{\nu_n \rho^{n+2}} \sum_{d(x_i,x_j)<\rho} \mu_j \big(\bar{u}(x_i)-P_{x_i x_j}\bar{u}(x_j)\big).
\end{equation}
This operator is known as the \emph{graph connection Laplacian}. The graph connection Laplacian is a nonnegative self-adjoint operator of dimension $r(E)N$ with respect to the weighted discrete $L^2$-inner product, where $r(E)$ is the rank of the vector bundle $E$. We denote the $k$-th eigenvalue of $\Delta_{\Gamma}^{\rho}$ by $\til\lambda_k(\Gamma)$.

Our second result can be viewed as a discretized version of Theorem \ref{t:smoothcase}.

\begin{theorem}\label{t:graphcase}
Suppose that the absolute value of the sectional curvatures of $M$ 
and the norm of the curvature tensor of $\nabla$ are bounded by
constants $\KM$ and $\KE$. Then there exists $\rho_0=\rho_0(n,K_M,K_E)<r_{inj}(M)/2$,
such that for any $\rho<\rho_0$, \,$\varepsilon<\rho/4$, \,$k\leq r(E)N$ satisfying $\lambda_k<\rho^{-2}/16$, 
we have
$$
  \big| \til\la_k(\Gamma) - \la_k \big| \le
   C_{n,K_M}\big(\rho+\frac{\ep}{\rho}+\lambda_k^{1/2}\rho\big)\lambda_k +C_{n,K_E} (\rho+\frac{\ep}{\rho})\, .
$$
\end{theorem}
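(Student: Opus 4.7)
The overall strategy is the triangle inequality
\[
|\tilde\la_k(\Gamma)-\la_k|\le|\tilde\la_k(\Gamma)-\tilde\la_k|+|\tilde\la_k-\la_k|,
\]
where $\tilde\la_k$ is the $k$-th eigenvalue of the smooth $\rho$-connection Laplacian $\Dr$. If $\rho_0$ is chosen small enough depending on $n,\KM,\KE$, a preliminary min-max comparison forces $\tilde\la_k\le\rho^{-2}$ (the hypothesis of Theorem \ref{t:smoothcase}), and squaring its conclusion gives $|\tilde\la_k-\la_k|\le C_{n,\KM}(\rho+\la_k^{1/2}\rho)\la_k+C_{n,\KE}\rho$. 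The substantive task is thus to prove the \emph{discretization estimate}
\[
|\tilde\la_k(\Gamma)-\tilde\la_k|\le C_{n,\KM}\tfrac{\ep}{\rho}\la_k+C_{n,\KE}\tfrac{\ep}{\rho};
\]
the two inequalities then combine to the stated bound.

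For this I would work at the level of the quadratic forms
\[
q^\rho(u)=\tfrac{n+2}{\nu_n\rho^{n+2}}\int_M\!\!\int_{B_\rho(x)}|u(x)-P_{xy}u(y)|^2\,dy\,dx,
\]
and the corresponding $\mu_i\mu_j$-weighted sum $q^\rho_\G$ associated to $\Delta^\rho_\G$. Following the scheme of \cite{BIK1,BIK2}, introduce a \emph{discretization map} $D\co L^2(M,E)\to L^2(X_\ep,E|_{X_\ep})$ by $(Du)(x_i)=\mu_i^{-1}\int_{V_i}P_{x_iy}u(y)\,dy$ and an \emph{interpolation map} $I\co L^2(X_\ep,E|_{X_\ep})\to L^2(M,E)$ by $(I\bar u)(y)=P_{yx_i}\bar u(x_i)$ for $y\in V_i$. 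The facts to verify are: (i) $D$ and $I$ are $(1+O(\KE\ep^2))$-quasi-isometries of the respective $L^2$-spaces, which follows from $\mu_i=\vol V_i$ together with unitarity of parallel transport modulo holonomy error; and (ii) the forms satisfy
\[
\bigl|q^\rho_\G(Du)-q^\rho(u)\bigr|\le C_{n,\KM}\tfrac{\ep}{\rho}\,q^\rho(u)+C_{n,\KE}\tfrac{\ep}{\rho}\|u\|^2,
\]
together with a symmetric estimate for $q^\rho(I\bar u)$. Plugging (i)--(ii) into the Courant--Fischer min-max characterization, applied to the $k$-dimensional spans of the first $k$ eigensections of $\Dr$ respectively $\Delta^\rho_\G$, and absorbing $q^\rho(u)\le\tilde\la_k\|u\|^2\le\rho^{-2}\|u\|^2$ into $\la_k\|u\|^2$, yields the displayed discretization estimate.

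The error in step (ii) has two main sources. First, for each $x\in M$ the geodesic ball $B_\rho(x)$ and the union $\bigcup_{d(x,x_j)<\rho}V_j$ differ in a ``shell'' of volume $O(\ep\rho^{n-1})$, producing a relative error of order $\ep/\rho$ after one applies the pointwise bound $|u(x)-P_{xy}u(y)|^2\le 2|u(x)|^2+2|u(y)|^2$ on the symmetric difference. Second, within each cell $V_j$ the parallel transport $P_{xy}$ differs from the composition $P_{xx_j}\circ P_{x_jy}$ by an operator-norm amount $O(\KE\cdot\mathrm{Area}(x,x_j,y))=O(\KE\ep\rho)$, via the standard holonomy estimate on a small geodesic triangle. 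I expect the main obstacle to be the bookkeeping for this second, curvature-of-$E$ error: one must split the perturbation in the integrand via Cauchy--Schwarz so that one factor is absorbed into $q^\rho(u)$ (and hence into $\la_k\|u\|^2$ via the spectral bound) while the other produces only the additive $(\KE\ep/\rho)\|u\|^2$. This decomposition is precisely what yields the separate constants $C_{n,\KM}$ and $C_{n,\KE}$ in the final bound, and keeps the $\KE$-dependence linear rather than quadratic in $\ep/\rho$.
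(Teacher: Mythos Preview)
Your triangle-inequality strategy through $\til\la_k$ is reasonable in spirit, and your identification of the holonomy error is essentially Lemma~\ref{changingpath}. The genuine gap is the shell estimate in step~(ii). Applying the crude pointwise bound $|u(x)-P_{xy}u(y)|^2\le 2|u(x)|^2+2|u(y)|^2$ over a shell of volume $O(\ep\rho^{n-1})$ and multiplying by the normalization $\rho^{-(n+2)}$ in $q^\rho$ yields an error of order $\ep\rho^{-3}\|u\|_{L^2}^2$, \emph{not} an $O(\ep/\rho)$ relative error in $q^\rho(u)$. Since the theorem must cover small $\la_k$ (even $\la_k=0$), this term cannot be absorbed into $C(\ep/\rho)\la_k+C(\ep/\rho)$, and your claimed inequality $|q^\rho_\G(Du)-q^\rho(u)|\le C(\ep/\rho)q^\rho(u)+C(\ep/\rho)\|u\|^2$ does not follow from the argument you sketch. (Your claim~(i) is also inaccurate: the map $D$ is a contraction, not a global $(1+O(\KE\ep^2))$-quasi-isometry; the lower bound on $\|Du\|_\G$ necessarily involves the energy, as in Lemma~\ref{discrete-L2}.)

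The paper sidesteps the shell problem entirely by \emph{not} comparing $q^\rho_\G$ with $q^\rho$ at the same radius, and hence not passing through $\til\la_k$ at all. For the upper bound it shows (Lemma~\ref{discrete-energy}) that $\|\delta(Qu)\|^2$ is controlled by $D^{\rho+2\ep}(u)$ via the inclusion $\bigcup_{d(x_i,x_j)<\rho}V_j\subset B_{\rho+2\ep}(y)$, with no shell difference at all; then Lemma~\ref{l:Du-upper} at radius $\rho+2\ep$ converts this to $\|\nabla u\|^2$, and the factor $((\rho+2\ep)/\rho)^{n+2}=1+O(\ep/\rho)$ is where the $\ep/\rho$ legitimately appears. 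The lower bound similarly goes through $D^{\rho-2\ep}(Q^*\bar u)$ (Lemma~\ref{discrete-lower}) and then the smoothing operator of Section~\ref{sec:smooth} (Lemmas~\ref{l:Iu-lower-bound} and~\ref{l:nablaIu-upper-bound}) applied at the shifted radius. The key device is that the radius shift is harmless once one lands on the $\rho$-independent quantity $\|\nabla u\|^2$; staying at the level of $q^\rho$ and $\til\la_k$, as you propose, forces a direct shell comparison for which the crude bound is too weak.
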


For compact Riemannian manifolds without boundary, Theorem \ref{t:smoothcase} and \ref{t:graphcase} imply the closeness between the $\rho$-connection Laplacian and the graph connection Laplacian in the spectral sense. In the case of trivial bundles, this gives another proof for the closeness between the $\rho$-Laplacian (on functions) and the graph Laplacian in the spectral sense (as a special case of Theorem 1.2 in \cite{BIK2}).

\medskip
This paper is organized as follows. We introduce the general concept of $\rho$-connection Laplacians for metric-measure spaces in Section \ref{sec:general}. In Section \ref{sec:smooth}, we focus on the case of smooth connections on Riemannian manifolds and prove Theorem \ref{t:smoothcase}. We turn to the graph connection Laplacian in Section \ref{sec:graph} and prove Theorem \ref{t:graphcase}. 

\medskip
\noindent
\textbf{Acknowledgement.} We are grateful to the anonymous referee for thoroughly reading our manuscript and making valuable suggestions for improvements.

\section{General metric-measure setup}
\label{sec:general}

Let $(X,d)$ be a metric space and $E$ be an (continuous) Euclidean or Hermitian vector bundle over $X$.
That is, we have a fiber bundle $\pi:E\to X$, and each fiber $E_x:=\pi^{-1}(x)$, $x\in X$,
is a real or complex vector space equipped 
with an Euclidean or Hermitian inner product $\langle\, ,\rangle_{E_x}$. When it is clear at which point the inner product is taken, we omit the subscript $E_x$.

Let $\rho>0$ be a small parameter. We denote by $X^2(\rho)$ the set of pairs
$(x,y)\in X\times X$ such that $d(x,y)\le\rho$.

\begin{definition}
A \emph{$\rho$-connection} on $E$ is a (Borel measurable)
family of uniformly bounded linear maps $P_{xy}:E_y\to E_x, \,\sup_{(x,y)}\|P_{xy}\| < \infty,$
where $(x,y)$ ranges over $X^2(\rho)$. That is, $P_{xy}$ transports vectors from
$E_y$ to $E_x$. 

A $\rho$-connection $P=\{P_{xy}\}$ is said to be {\em Euclidean}
(resp.\ {\em Hermitian})
if all maps $P_{xy}$ are Euclidean isometries
(resp.\ unitary operators).
A $\rho$-connection $P$ is said to be {\em symmetric},
if $P_{xy}$ is invertible and $P_{xy}^{-1}=P_{yx}$ for all $(x,y)\in X^2(\rho)$.
\end{definition}

Our primary example of $\rho$-connection is the one associated with a
connection $\nabla$ on an (smooth) Euclidean or Hermitian vector bundle
over a Riemannian manifold $M$.
Namely, there is a parallel transport canonically associated with the connection $\nabla$. 
Then given two points $x,y\in M$ with $d(x,y)\leq\rho<r_{inj}(M)$, 
one can transport vectors from $E_y$ to $E_x$ along the unique 
minimizing geodesic $[yx]$.

\smallskip
Now suppose $(X,d)$ is equipped with a measure $\mu$: we are working in a metric-measure space $(X,d,\mu)$. 
Assume that $X$ is compact and $\mu(X)<\infty$.
We denote by $L^2(X,E)$ the space of $L^2$-sections of
the vector bundle $E$.

The following expression $u(x) - P_{xy}(u(y))$,
where $u\in L^2(X,E)$ and $x,y\in X$, shows up frequently.
We introduce a short notation for it:
\be\label{e:defGa}
  \Gamma_{xy}(u) := u(x) - P_{xy}(u(y)) .
\ee
Note that $\Gamma_{xy}(u)$ is only defined for $(x,y)\in X^2(\rho)$
and it belongs to $E_x$.

Let $\alpha:X\to\R_+$ and $\beta:X^2(\rho)\to\R_+$
be positive $L^\infty$ functions satisfying that $\alpha$ is bounded away from 0
and $\beta$ is symmetric: $\beta(x,y)=\beta(y,x)$ for all $x,y$.
We define the {\em $\rho$-connection Laplacian} $\Delta^\rho_{\alpha,\beta}$
associated with the $\rho$-connection $P$ with weights $\alpha,\beta$
as follows. First we define an $L^2$-type inner product $\tprod{\, ,}_\alpha$ on $L^2(X,E)$
by
$$
  \tprod{u,v}_\alpha := \int_X \alpha(x) \, \big\langle u(x), v(x) \big\rangle_{E_x} \,d\mu(x),
$$ 
for $u,v\in L^2(X,E)$, 
and the associated norm $\tnorm{\cdot}_\alpha$ is
$$
  \tnorm{u}_\alpha^2 := \tprod{u,u}_{\alpha} = \int_X \alpha(x) \, |u(x)|^2 \,d\mu(x) .
$$
Here $|u(x)|^2=\langle u(x), u(x) \rangle$ is taken with respect to the
Euclidean or Hermitian inner product in the fiber $E_x$.
Note that the standard inner product on $L^2(X,E)$ corresponds
to the case $\alpha\equiv 1$, in which case the norm is denoted by the usual $\|u\|_{L^2}$.

Next we define a symmetric form $D_\beta^{\rho}$ on $L^2(X,E)$ by
\begin{equation}\label{def-energy}
  D_\beta^{\rho}(u,v) 
  = \frac12 \iint_{X^2(\rho)} \beta(x,y)\, \big\langle\Gamma_{xy}(u),\Gamma_{xy}(v)\big\rangle_{E_x} \, 
  d\mu(x) d\mu(y) .
\end{equation}
Finally, our Laplacian $\Delta^\rho_{\alpha,\beta}$ is the unique operator from $L^2(X,E)$ to itself
satisfying
\be\label{e:alpha-beta-self-adjoint}
  \tprod{\Delta^\rho_{\alpha,\beta}u, v}_\alpha = D_\beta^{\rho}(u,v)
\ee
for all $u,v\in L^2(X,E)$. 
In other words, $\Delta^\rho_{\alpha,\beta}$ is the self-adjoint 
operator on $L^2(X,\alpha\mu,E)$ associated with the
quadratic form $D_\beta^{\rho}$. Note that the boundedness of $P_{xy}$ 
and $\mu(X)$ imply that both $D_{\beta}^{\rho}$ and $\Delta^\rho_{\alpha,\beta}$
are bounded.

The following proposition gives an explicit formula for $\Delta^\rho_{\alpha,\beta}$.

\begin{proposition}\label{p:rholaplacian-formula}
Assume that $X$ is compact and $\mu(X)<\infty$. Then $\Delta^\rho_{\alpha,\beta}$ can be written as
\be\label{e:mm-laplace-general}
  \Delta^\rho_{\alpha,\beta}u(x)
  = \frac1{2\alpha(x)} \int_{B_\rho(x)} \beta(x,y)
  \big(\Gamma_{xy}(u) - P_{yx}^*\Gamma_{yx}(u)\big) \,d\mu(y) \, ,
\ee
where $P_{xy}^*:E_x\to E_y$ is the operator adjoint to $P_{xy}$ with respect to
the inner products on the fibers $E_x$ and $E_y$.

In particular, if the $\rho$-connection $P$ is Euclidean or Hermitian and is symmetric, then
\be\label{e:mm-laplace}
   \Delta^\rho_{\alpha,\beta}u(x)
  = \frac1{\alpha(x)} \int_{B_\rho(x)} \beta(x,y) \, \Gamma_{xy}(u) \,d\mu(y) \, .
\ee
\end{proposition}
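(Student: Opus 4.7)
The plan is to recover $\Delta^\rho_{\alpha,\beta}$ from the quadratic form $D_\beta^\rho$ by a polarization-and-Fubini argument, then specialize to the Euclidean/Hermitian symmetric case. The two ingredients I need are a one-step expansion of $\Gamma_{xy}(v)$ combined with the fiberwise adjoint $P_{xy}^*$ to move each summand into a convenient fiber, and a symmetry swap $x\leftrightarrow y$ under the double integral to consolidate everything back into the fiber $E_x$, so that the resulting expression can be read off as an inner product against $v(x)$.

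First I would expand $\langle\Gamma_{xy}(u),\Gamma_{xy}(v)\rangle_{E_x} = \langle\Gamma_{xy}(u),v(x)\rangle_{E_x} - \langle P_{xy}^*\Gamma_{xy}(u),v(y)\rangle_{E_y}$ by unfolding $\Gamma_{xy}(v) = v(x) - P_{xy}v(y)$ and applying the defining property of the adjoint. The first term is already paired with $v(x)$ in $E_x$. To bring the second into the same form I would use that the domain $X^2(\rho)$, the weight $\beta$, and the product measure $d\mu\otimes d\mu$ are all symmetric under $x\leftrightarrow y$, and rename the dummy variables; the second term then becomes $\beta(x,y)\langle P_{yx}^*\Gamma_{yx}(u),v(x)\rangle_{E_x}$. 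Fubini (legitimate because $\sup\|P_{xy}\|<\infty$, $\beta\in L^\infty$, and $\mu(X)<\infty$) then rewrites $D_\beta^\rho(u,v)$ as $\int_X \bigl\langle \tfrac12\int_{B_\rho(x)}\beta(x,y)\bigl(\Gamma_{xy}(u)-P_{yx}^*\Gamma_{yx}(u)\bigr)\,d\mu(y),\,v(x)\bigr\rangle_{E_x}\,d\mu(x)$. Comparing with $\tprod{\Delta^\rho_{\alpha,\beta}u,v}_\alpha = \int_X\alpha(x)\langle\Delta^\rho_{\alpha,\beta}u(x),v(x)\rangle_{E_x}\,d\mu(x)$ and letting $v$ range over $L^2(X,E)$ would force \eqref{e:mm-laplace-general}.

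The simplified formula \eqref{e:mm-laplace} then drops out of a pointwise fiber computation. Under the Euclidean/Hermitian assumption one has $P_{xy}^*=P_{xy}^{-1}$, and symmetry gives $P_{xy}^{-1}=P_{yx}$, so $P_{yx}^*=P_{xy}$. Therefore $P_{yx}^*\Gamma_{yx}(u) = P_{xy}(u(y)-P_{yx}u(x)) = P_{xy}u(y)-u(x) = -\Gamma_{xy}(u)$, and the bracket in \eqref{e:mm-laplace-general} collapses to $2\Gamma_{xy}(u)$, which absorbs the prefactor $\tfrac12$ and produces \eqref{e:mm-laplace}.

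I do not expect any real obstacle: the entire argument is bookkeeping about which fiber each vector lives in, and the standing boundedness and symmetry hypotheses ensure that both the Fubini exchange and the $x\leftrightarrow y$ relabelling are valid. The one point requiring care is ordering — one must apply the adjoint \emph{before} swapping variables, so that after the swap the integrand lives in the fiber $E_x$ compatible with the target argument $v(x)$; doing it in the wrong order would leave vectors and covectors in mismatched fibers and the pairing with $v(x)$ would be meaningless.
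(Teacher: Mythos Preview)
Your proposal is correct and follows essentially the same route as the paper's proof: expand $\Gamma_{xy}(v)$, push the $P_{xy}$ onto the other factor via the adjoint, swap $x\leftrightarrow y$ in the second term using the symmetry of $\beta$ and of $X^2(\rho)$, and read off the operator; the specialization to the Euclidean/Hermitian symmetric case via $P_{yx}^*=P_{yx}^{-1}=P_{xy}$ and $P_{yx}^*\Gamma_{yx}(u)=-\Gamma_{xy}(u)$ is identical to the paper's. Your remark on applying the adjoint before the variable swap is exactly the point the paper handles implicitly.
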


\begin{proof}
The proof is a straightforward calculation.
For brevity, we write $dx$ and $dy$ instead of $d\mu(x)$ and $d\mu(y)$.
By definitions of $D_\beta^{\rho}$ and $\Gamma_{xy}(v)$, we have
$$
  D_\beta^{\rho}(u,v) 
  = \frac12 \iint_{X^2(\rho)} \beta(x,y) \big\langle \Gamma_{xy}(u), v(x) - P_{xy}(v(y)) \big\rangle \,dxdy .
$$
Expand it and rewrite the term $\langle \Gamma_{xy}(u), P_{xy}(v(y)) \rangle$ 
as follows:
$$
 \big\langle \Gamma_{xy}(u), P_{xy}(v(y)) \big\rangle_{E_x} 
 = \big\langle P_{xy}^*\Gamma_{xy}(u), v(y) \big\rangle_{E_y} .
$$
By swapping $x,y$ and using the symmetry of $\beta$, one gets
$$
 \iint_{X^2(\rho)} \beta(x,y) \big\langle P_{xy}^*\Gamma_{xy}(u), v(y) \big\rangle  \,dxdy
 = \iint_{X^2(\rho)} \beta(x,y) \big\langle P_{yx}^*\Gamma_{yx}(u), v(x) \big\rangle  \,dxdy .
$$
Substituting the last two formulae into the first one yields
$$
\begin{aligned}
  D_\beta^{\rho}(u,v) 
  &= \frac12\iint_{X^2(\rho)} \beta(x,y) \big\langle \Gamma_{xy}(u)-P_{yx}^*\Gamma_{yx}(u), v(x)) \big\rangle \,dxdy \\
  &= \frac12 \int_X \alpha(x) \bigg\langle\frac1{\alpha(x)}
   \int_{B_\rho(x)} \beta(x,y)  \big(\Gamma_{xy}(u)-P_{yx}^*\Gamma_{yx}(u)\big)\, dy , v(x)\bigg\rangle dx.
 \end{aligned}
$$
The right-hand side of the last formula is the $\tprod{\, ,}_\alpha$-product of
the right-hand side of \eqref{e:mm-laplace-general} and $v$. 
This proves \eqref{e:mm-laplace-general}.

To deduce \eqref{e:mm-laplace}, observe that $P_{yx}^*=P_{yx}^{-1}=P_{xy}$,
since $P$ is Euclidean or Hermitian and is symmetric.
Hence by (\ref{e:defGa}),
$$
  P_{yx}^*\G_{yx}(u) = P_{yx}^{-1}\big(u(y)-P_{yx}(u(x))\big)
  = P_{xy}(u(y)) - u(x) = - \G_{xy}(u) .
$$
This and \eqref{e:mm-laplace-general} prove \eqref{e:mm-laplace}.
\end{proof}

\begin{remark}
We can assume that $\beta(x,y)$ is defined for all pairs $(x,y)\in X^2(\rho)$ and
is equal to 0 whenever $d(x,y)>\rho$.
This allows us to assume that $P_{xy}$ is defined for all pairs $(x,y)\in X\times X$.
It does not matter how $P$ is extended to pairs $x,y$ with $d(x,y)>\rho$,
since in this case it will always be multiplied by 0.
This allows us to write integration over $X$ rather than over $\rho$-balls
whenever convenient.
\end{remark}

Denote by $\sigma( \Delta^\rho_{\alpha,\beta})$ the spectrum of $ \Delta^\rho_{\alpha,\beta}$ and by
$\sigma_{ess}( \Delta^\rho_{\alpha,\beta})$ the essential spectrum.

\begin{corollary} \label{c:essential}
$$
\sigma_{ess}( \Delta^\rho_{\alpha,\beta}) \subset \big[a(\rho, \alpha, \beta),\, \infty \big),
$$
where $a(\rho, \alpha, \beta)=\min_{x  \in X} \frac{1}{2 \alpha(x)}
\int_X \beta(x, y) dy$.

In particular, if the $\rho$-connection $P$ is Euclidean or Hermitian and is symmetric, then
$$
\sigma_{ess}( \Delta^\rho_{\alpha,\beta}) \subset \big[2 a(\rho, \alpha, \beta),\, \infty \big).
$$
\end{corollary}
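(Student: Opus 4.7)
My approach is to decompose $\Delta^\rho_{\alpha,\beta}$ as a fiberwise self-adjoint multiplication operator plus a compact integral operator, and then invoke Weyl's theorem on stability of the essential spectrum under compact perturbations. Starting from the explicit formula \eqref{e:mm-laplace-general} and substituting the definitions of $\G_{xy}(u)$ and $\G_{yx}(u)$, one gets
\[
  \G_{xy}(u) - P_{yx}^*\G_{yx}(u) = (I + P_{yx}^*P_{yx})\,u(x) - (P_{xy} + P_{yx}^*)\,u(y),
\]
which splits $\Delta^\rho_{\alpha,\beta} = A - K$, where $A$ is multiplication by the bundle endomorphism
\[
  A(x) = \frac{1}{2\alpha(x)}\int_{B_\rho(x)}\beta(x,y)(I+P_{yx}^*P_{yx})\,d\mu(y),
\]
and $K$ is the integral operator with kernel $k(x,y) = \tfrac{1}{2\alpha(x)}\beta(x,y)(P_{xy}+P_{yx}^*)$ mapping $E_y\to E_x$, supported on $X^2(\rho)$.

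For each $x$, the endomorphism $A(x)$ is self-adjoint on $E_x$, and since $P_{yx}^*P_{yx}\ge 0$, one has $I+P_{yx}^*P_{yx}\ge I$. Integrating against $\beta\ge 0$ yields the pointwise operator inequality
\[
  A(x) \,\ge\, \frac{1}{2\alpha(x)}\int_{B_\rho(x)}\beta(x,y)\,d\mu(y)\cdot I \,\ge\, a(\rho,\alpha,\beta)\cdot I,
\]
so as an operator on $L^2(X,\alpha\mu,E)$ one has $\tprod{Au,u}_\alpha \ge a(\rho,\alpha,\beta)\tnorm{u}_\alpha^2$, hence $\sigma(A)\subset [a(\rho,\alpha,\beta),\infty)$. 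For $K$, the kernel is uniformly bounded (because $\alpha$ is bounded away from $0$, $\beta\in L^\infty$, and $P$ is uniformly bounded by definition of a $\rho$-connection), so $\iint\|k(x,y)\|^2\,d\mu(x)d\mu(y)<\infty$ since $\mu(X)<\infty$; thus $K$ is Hilbert--Schmidt and in particular compact. Weyl's theorem then gives $\sigma_{ess}(\Delta^\rho_{\alpha,\beta}) = \sigma_{ess}(A) \subset [a(\rho,\alpha,\beta),\infty)$.

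In the Euclidean or Hermitian symmetric case, $P_{yx}^* = P_{yx}^{-1} = P_{xy}$, so $P_{yx}^*P_{yx} = I$ and the factor in front of $u(x)$ is exactly $2I$, which doubles the lower bound and yields $\sigma_{ess}(\Delta^\rho_{\alpha,\beta}) \subset [2a(\rho,\alpha,\beta),\infty)$. There is no serious obstacle here; the only point deserving a line of care is that self-adjointness and the essential spectrum are computed in the inner product $\tprod{,}_\alpha$, but since $\alpha$ is bounded above and below the norm $\tnorm{\cdot}_\alpha$ is equivalent to the standard $L^2$ norm, and compactness, self-adjointness, and the spectrum are invariant under equivalent-norm perturbations of the Hilbert space structure.
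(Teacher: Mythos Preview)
Your proof is correct and follows essentially the same approach as the paper's. The paper splits the operator into three pieces $\Delta_1+\Delta_2+\Delta_3$, where $\Delta_1$ is scalar multiplication by $\frac{1}{2\alpha(x)}\int\beta(x,y)\,dy$, $\Delta_2$ is multiplication by the nonnegative endomorphism $\frac{1}{2\alpha(x)}\int\beta(x,y)P_{yx}^*P_{yx}\,dy$, and $\Delta_3$ is the compact integral operator; you simply merge $\Delta_1$ and $\Delta_2$ into your single endomorphism-valued multiplication operator $A$ and use $P_{yx}^*P_{yx}\ge 0$ pointwise to get the lower bound directly, which is a slightly cleaner packaging of the same argument.
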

\begin{proof}
Due to \eqref{e:defGa} and \eqref{e:mm-laplace-general}, $\Delta^\rho_{\alpha,\beta}$ is the sum of three operators: the multiplication operator
$$
\Delta_1 u(x) = \frac{1}{2 \alpha(x)} u(x)
\int_X \beta(x, y)\,dy,
$$
the operator
$$
\Delta_2 u(x)=\frac{1}{2 \alpha(x)}
\int_X \beta(x, y) P^*_{yx} P_{yx} u(x)\, dy, 
$$
and
$$
\Delta_3 u(x)=\frac{1}{2 \alpha(x)}
\int_X \beta(x, y) \left(-P_{xy}-P^*_{yx} \right) u(y) \,dy.
$$
Observe that $\sigma(\Delta_1) \subset [a(\rho, \alpha, \beta), \infty)$;
while $\Delta_2$, as seen from its quadratic form
$$
 \tprod{\Delta_2 u, u}_\alpha= \frac12 \iint_{X^2(\rho)} \beta(x, y) \langle P_{xy} u,\, 
 P_{xy} u \rangle dxdy,
$$
is non-negative, and $ \Delta_3$ is compact as an operator with bounded kernel, which proves the first claim. The second claim follows from the same considerations by using the simpler form \eqref{e:mm-laplace}.
\end{proof}

\subsection*{Examples}
In this paper, we only need a few choices for $\alpha$ and $\beta$.
First observe that the Riemannian $\rho$-connection Laplacian (\ref{sec1-def-rhoLaplacian}) is obtained
by using $\alpha(x)=1$ and
$\beta(x,y)=\frac{2(n+2)}{\nu_n\rho^{n+2}}$ if $d(x,y)\le\rho$.
Equivalently, one can use $\alpha(x)=\frac{\nu_n\rho^{n+2}}{2(n+2)}$
and $\beta(x,y)=1$  if $d(x,y)\le\rho$. Another convenient normalization, as seen in \cite{BIK2}, is by volumes of $\rho$-balls:
$\beta(x,y)=1$ if $d(x,y)\le\rho$ and 
$\alpha(x)=\rho^2\mu(B_\rho(x))$ for all $x\in X$.
We call the operator $\Delta^\rho_{\alpha,\beta}$
with these $\alpha,\beta$ the
{\em volume-normalized} $\rho$-connection Laplacian.

Note that Laplacians on real-or complex-valued functions are a special
case of connection Laplacians. Namely, for a Riemannian
manifold $M$, one simply considers the trivial bundle $E=M\times\R$ or $E=M\times\C$
equipped with the trivial connection $\nabla$. The sections of the trivial bundle
are functions on $M$ and the connection Laplacian
is simply the Laplace-Beltrami operator (on functions). Similarly, for a metric-measure space $X$, 
one can consider the same trivial bundle
with the trivial $\rho$-connection defined by $P_{xy}(y,t)=(x,t)$
for $x,y\in X$ and $t\in\R$ (or $t\in\C$).
Then \eqref{e:mm-laplace} boils down to
$$
 \Delta^\rho_{\alpha,\beta}u(x) = \frac1{\alpha(x)} \int_{B_\rho(x)} \beta(x,y) \big(u(x)-u(y) \big)\,d\mu(y),
$$
where $u\in L^2(X)$.
Such operators are called $\rho$-Laplacians in \cite{BIK2}.
Some analogues of the results of this paper
in the case of Laplacians on functions
can be found in \cite{BIK1,BIK2,LM,L}.

\subsection*{Spectra of $\rho$-connection Laplacians}

Since $\Delta^\rho_{\alpha,\beta}$ is self-adjoint with respect to
the $L^2$-compatible inner product $\tprod{\, ,}_\alpha$
and the corresponding quadratic form $D_\beta^{\rho}$
is positive semi-definite,
the spectrum of $\Delta^\rho_{\alpha,\beta}$ is contained in $\R_{\geq 0}$.

This spectrum consists of the discrete and essential spectra.
It follows from Corollary \ref{c:essential} that the essential spectrum
has a lower bound in all cases in question.
We are only interested in the part of the spectrum below this bound.
We enumerate this part of the spectrum
as follows (cf.\ Notation 2.1 in \cite{BIK2}).

\begin{notation}
\label{n:lambda}
Denote by $\til\la_\infty=\til\la_\infty(E,P,\rho,\alpha,\beta)$ 
the infimum of the essential spectrum of $\Delta^\rho_{\alpha,\beta}$.
If the essential spectrum is empty
(e.g. if $X$ is a discrete space),
we set $\til\la_\infty=\infty$.
For every $k\in\N_+$, we define $\til\la_k=\til\la_k(E,P,\rho,\alpha,\beta)\in[0,+\infty]$ as follows.
Let $0\le \til\la_1\le \til\la_2\le\cdots$ be the eigenvalues of $\Delta^\rho_{\alpha,\beta}$
(counting multiplicities)
that are smaller than $\til\la_\infty$. 
If there are only finitely many of such eigenvalues, 
we set
$\til\la_k=\til\la_\infty$ for all larger values of~$k$.

We abuse the language and refer to $\til\la_k(E,P,\rho,\alpha,\beta)$ as
the \emph{$k$-th eigenvalue} of $\Delta^\rho_{\alpha,\beta}$ even though
it may be equal to $\til\la_\infty$.
\end{notation}

By the standard min-max formula, for every $k\in\N_+$, we have
\be \label{e:minmax}
\til\la_k(E,P,\rho,\alpha,\beta) = \inf\nolimits_{\strut\dim L=k} \ \sup\nolimits_{\strut u \in L\setminus\{0\}}
\left(\frac{ D_\beta^{\rho}(u,u)}{\tnorm{u}_\alpha^2}\right),
\ee
where the infimum is taken over all $k$-dimensional
linear subspaces $L$ of $L^2(X,E)$.
We emphasize that \eqref{e:minmax}
holds in both cases of $\til\la_k<\til\la_\infty$ and $\til\la_k=\til\la_\infty$.

\section{Smooth connections on Riemannian manifolds}
\label{sec:smooth}

In this section, suppose $X=M^n$ is
a compact Riemannian manifold of dimension $n$ without boundary, and $E$ 
is a smooth Euclidean (or Hermitian) vector bundle over $M$
equipped with a smooth Euclidean (or Hermitian) connection $\nabla$.
Recall that an Euclidean (resp. Hermitian) connection is a connection that is compatible 
with the Euclidean (resp. Hermitian) metric on the vector bundle.
For a (sufficiently smooth) section $u$ of $E$,
$\nabla u$ is a section of the fiber bundle $\Hom(TM,E)$ over $M$.
Here $\Hom(TM,E)$ is the fiber bundle over $M$ whose fiber over $x\in M$
is the space $\Hom(T_xM,E_x)$ of $\R$-linear maps from $T_xM$ to $E_x$.
Note that in the case when $E$ is a complex fiber bundle,
$\Hom(T_xM,E_x)$ has a natural complex structure.
The standard (Euclidean or Hermitian) inner product on $\Hom(T_xM,E_x)$ is defined by
$$
  \langle \xi,\eta\rangle = \sum_{i=1}^n \langle \xi(e_i), \eta(e_i) \rangle_{E_x}
$$
for $\xi,\eta\in \Hom(T_xM,E_x)$,
where $\{e_i\}$ is an orthonormal basis of $T_xM$.
This defines the standard norm
\be\label{e:norm-nabla}
  \|\xi\|^2 = \sum_{i=1}^n |\xi(e_i)|^2
\ee
on fibers and the standard
$L^2$-norm on sections of $\Hom(TM,E)$.

Let $\Delta=\nabla^*\nabla$ be the connection Laplacian of $\nabla$ (e.g. \cite[Chapter 7.3.2]{PP} or \cite{KOP}).
It is a nonnegative self-adjoint operator
acting on $H^2$-sections of $E$. The corresponding energy functional
is given by
$$
  \langle \Delta u,u \rangle_{L^2} = \|\nabla u\|_{L^2}^2
$$
for $u\in H^2(M,E)$.

Since $\Delta$ is a nonnegative self-adjoint elliptic operator,
it has a discrete spectrum $0\le\lambda_1\le\lambda_2\le\dots$,
and $\lambda_k\to\infty$ as $k\to\infty$.
The min-max formula for $\lambda_k$ takes the form
\be \label{e:minmax-smooth}
\la_k = \inf\nolimits_{\strut\dim L=k} \ \sup\nolimits_{\strut u \in L\setminus\{0\}}
\left(\frac{\|\nabla u\|^2_{L^2}}{\|u\|^2_{L^2}}\right)
\ee
where the infimum is taken over all $k$-dimensional
linear subspaces $L$ of $H^1(M,E)$.
The goal of this section is to prove that $\la_k$
are approximated by eigenvalues of a $\rho$-connection
Laplacian defined below.

\smallskip
Let $\rho>0$ be smaller than the injectivity radius of $M$.
Let $P=\{P_{xy}\}$ be the $\rho$-connection associated with the connection $\nabla$,
which is given by the parallel transport from $y$ to $x$ along the unique minimizing geodesic $[yx]$.
This particular $\rho$-connection $P$ is unitary and symmetric since the connection $\nabla$ is Euclidean (or Hermitian).
We consider the $\rho$-connection Laplacian
$\Delta^\rho=\Delta^\rho_{\alpha,\beta}$ given by \eqref{e:mm-laplace}
with $\alpha(x)=1$ and
$\beta(x,y)=\frac{2(n+2)}{\nu_n\rho^{n+2}}$ if $d(x,y)\le\rho$.
That is,
\be\label{e:mm-laplace-riem}
   \Delta^\rho u(x)
  = \frac{2(n+2)}{\nu_n\rho^{n+2}} \int_{B_\rho(x)} \G_{xy}(u) \,dy.
\ee
Recall that $\Gamma_{xy}(u)$ is defined in \eqref{e:defGa}.
Here and later on in this paper, we denote by $dx,dy$ the
integration with respect to the Riemannian volume on $M$.
Denote by $\til\la_k$ the $k$-th eigenvalue
of $\Delta^\rho$ (see Notation \ref{n:lambda}).

We introduce a quadratic form $D^{\rho}$ on $L^2(M,E)$ by
\be\label{e:defD}
  D^{\rho}(u) = \int_M\int_{B_\rho (x)}  |\G_{xy}(u)|^2 \, dx dy .
\ee
Note that for the constant weight $\beta(x,y)=\frac{2(n+2)}{\nu_n\rho^{n+2}}$
chosen for $\Delta^\rho$, we have
$$
  D_\beta^{\rho}(u,u) =  \frac{n+2}{\nu_n\rho^{n+2}} D^{\rho}(u) .
$$
Hence \eqref{e:minmax} takes the form
\be \label{e:minmax-tilde}
\til\la_k = \frac{n+2}{\nu_n\rho^{n+2}} \
 \inf\nolimits_{\strut\dim L=k} \ \sup\nolimits_{\strut u \in L\setminus\{0\}}
\left(\frac{D^{\rho}(u)}{\|u\|_{L^2}^2}\right) .
\ee

\smallskip
The rest of this section is a proof of Theorem \ref{t:smoothcase}.

\subsection{Preparations and notations}


For $x\in M$, denote by $\exp_x:T_xM\to M$ the Riemannian
exponential map. We only need its restriction onto
the $\rho$-ball $\mathcal{B}_\rho(0)\subset T_xM$.
For $v\in T_xM$, denote by $J_x(v)$ the Jacobian of $\exp_x$ at $v$.
Let $J_{\min}(r)$ and $J_{\max}(r)$ denote the minimum and maximum
of $J_x(v)$ over all $x,v$ with $|v|\le r$.
The Rauch Comparison Theorem implies that
$$
 \left(\frac{\sin r}{r}\right)^{n-1}
  \le J_{\min}(r) \le 1 \le J_{\max}(r) 
  \le \left(\frac{\sinh r}{r}\right)^{n-1},
$$
for $r<K_M^{1/2}\rho<c_n$, see \eqref{e:rho-bound}.
In particular,
\be\label{e:rauch}
  (1+C_n\KM\rho^2)^{-1} 
  \le J_{\min}(r) \le 1 \le J_{\max}(r) 
  \le 1+C_n\KM\rho^2 .
\ee
Moreover, we choose $c_n$ to be sufficiently small such that $J_{\max}(r)/J_{\min}(r)<2$. Later we will mostly take $r=\rho$ and we denote $J_{\min}:=J_{\min}(\rho),\, J_{\max}:=J_{\max}(\rho)$ for short.

As a consequence of \eqref{e:rauch}, Corollary \ref{c:essential} implies that
\begin{equation}
\til\lambda_\infty\geq \frac{2(n+2)}{\nu_n \rho^{n+2}}\, \min_{x \in M} \mu(B_\rho(x))
\geq \frac{2(n+2)}{ \rho^{2}} \frac{1}{1+C_n K_M \rho^2} \geq \sigma_n \rho^{-2},
\end{equation}
for some constant $\sigma_n$ depending only on $n$ due to our choice of $\rho$ in \eqref{e:rho-bound}.
This shows that $\til\lambda_\infty$ is of order $\rho^{-2}$ in the present case. 

\smallskip
Later we use the following well-known inequality. We did not find a precise
reference for it, so we give a short proof here.

\begin{lemma} \label{l:parallel}
Let $\ga_s\co [0,1]\to M$ be a smooth family of paths 
from a fixed point $y\in M$ to $x(s)$, $s \in [-\ep,\ep]$.
 Let $P_{\ga_s} (v)$ be the $\nabla$-parallel transport along $\ga_s$ 
 of $v \in E_{y} $ to $E_{x(s)}$.
Then
\be \label{e:parallel_estimate}
\big| \nabla_{s} P_{\ga_s}(v) \big| \leq |v| \cdot K_E \cdot\text{\rm length}(\ga_s) \cdot
 \sup_{t\in[0,1]} \left|\frac {d \ga_s(t)}{ds} \right|.
\ee
\end{lemma}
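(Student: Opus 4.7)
The plan is to exploit the standard link between the variation of parallel transport and the curvature tensor of $\nabla$. View $(s,t)\mapsto \gamma_s(t)$ as a smooth map from $(-\varepsilon,\varepsilon)\times[0,1]$ to $M$, and let $V(s,t)\in E_{\gamma_s(t)}$ denote the parallel transport of $v$ along $\gamma_s|_{[0,t]}$. By construction $\nabla_{\partial_t}V\equiv 0$, and since $V(s,0)=v$ is independent of $s$, also $\nabla_{\partial_s}V(s,0)=0$. The quantity we want to bound is $\nabla_{\partial_s}V(s,1)$.

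The key step is to apply the definition of the curvature operator $R$ of $\nabla$. Since $[\partial_s,\partial_t]=0$,
\begin{equation*}
\nabla_{\partial_t}\nabla_{\partial_s}V - \nabla_{\partial_s}\nabla_{\partial_t}V \;=\; R\bigl(\partial_t\gamma,\partial_s\gamma\bigr)V,
\end{equation*}
and as $\nabla_{\partial_t}V=0$, this reduces to
\begin{equation*}
\nabla_{\partial_t}\bigl(\nabla_{\partial_s}V\bigr) \;=\; R\bigl(\dot\gamma_s(t),\partial_s\gamma(s,t)\bigr)V(s,t).
\end{equation*}
This is a linear first-order ODE in $t$ for the section $t\mapsto \nabla_{\partial_s}V(s,t)$ along $\gamma_s$, with zero initial condition at $t=0$.

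Integrating along $t\in[0,1]$ (after trivializing $E$ along $\gamma_s$ via $\nabla$-parallel transport, so that $\nabla_{\partial_t}$ becomes an ordinary $t$-derivative) gives
\begin{equation*}
\nabla_{\partial_s}V(s,1) \;=\; \int_0^1 \Pi_{t\to 1}\,R\bigl(\dot\gamma_s(t),\partial_s\gamma(s,t)\bigr)V(s,t)\,dt,
\end{equation*}
where $\Pi_{t\to 1}$ is the $\nabla$-parallel transport from $E_{\gamma_s(t)}$ to $E_{x(s)}$ along $\gamma_s$. Since the connection is Euclidean/Hermitian, parallel transport is an isometry, so $|V(s,t)|=|v|$ and $\Pi_{t\to 1}$ preserves norms. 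Using $\|R\|\le K_E$ and estimating the integral by pulling $\sup_t|\partial_s\gamma(s,t)|$ out and bounding $\int_0^1|\dot\gamma_s(t)|\,dt=\operatorname{length}(\gamma_s)$ yields the claimed inequality.

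The only delicate point is the bookkeeping at the start: interpreting $V$ as a section along the two-parameter surface $(s,t)\mapsto\gamma_s(t)$ and confirming that the commutator identity above is the correct one for covariant derivatives in two independent parameters (which is just the definition of $R$, since $[\partial_s,\partial_t]=0$). Once that is in place, the curvature identity plus a trivial Gr\"onwall-type integration and the triangle inequality do all the work.
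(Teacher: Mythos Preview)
Your proof is correct and follows essentially the same route as the paper's own argument: define the two-parameter parallel section $V(s,t)$, use $\nabla_t V=0$ together with the curvature identity to obtain $\nabla_t(\nabla_s V)=R(\dot\gamma_s,\partial_s\gamma)V$, then integrate in $t$ after trivializing along $\gamma_s$ by parallel transport and estimate using unitarity of the transport and the bound $\|R\|\le K_E$. The paper carries out precisely these steps, with the same intermediate formula $\nabla_s P_{\gamma_s}(v)=\int_0^1 P_{\gamma_s[t,1]}\bigl(\nabla_t\nabla_s v(t,s)\bigr)\,dt$ and the same final estimate.
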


\begin{proof}
Let
$
v(t, s)= P_{\ga_s[0, t]}(v),
$
where $P_{\ga_s[t_1, t_2]}$ is the $\nabla$-parallel transport along $\ga_s$ from $\ga_s(t_1)$ to 
$\ga_s(t_2)$. Note that $P_{\ga_s[t_1, t_2]}$ is a unitary operator.

Observe that $\nabla_{t} \,v(t, s)=0$ and thus
$\nabla_{s} \nabla_{t}\, v(t,s)=0$.
Hence, using the definition of the curvature operator $R_E$, we see that
\be \label{e:variation}
\nabla_{t} \nabla_{s} v(t,s)= 
R_E\left(\frac{d\ga_s(t)}{dt},\,\frac{d\ga_s(t)}{ds}\right) v(t, s)
\in E_{\ga_s(t)}.
\ee
Estimating the right-hand side yields
\be \label{e:variation-bound}
 |\nabla_{t} \nabla_{s} v(t,s)| \le |v| \cdot \KE \cdot \left| \frac{d\ga_s(t)}{dt} \right| \cdot
 \sup_{t\in[0,1]} \left|\frac {d \ga_s(t)}{ds} \right|,
\ee
where we have used the fact that $|v(t,s)|=|v|$ since the parallel transport $P$ is unitary.
The plan is to integrate \eqref{e:variation} with respect to $t$. However, the fibers $E_{\ga_s(t)}$
vary with $t$. Thus, we use $P_{\ga_s[t, 1]}$ to identify them with $E_{x(s)}$.
Recall that by the definition of parallel translations, for any vector field $X$ along $\ga_s$, one has
$$
  \frac d{dt} \bigl(P_{\ga_s[t,1]} X(t)\bigr) 
  = P_{\ga_s[t,1]} \big( \nabla_t X(t) \big) .
$$
Note that the vectors under $\frac d{dt}$ in this formula lie in the same vector space $E_{x(s)}$
for all $t$. We apply this to $X(t)= \nabla_{s} v(t,s)$ and obtain
$$
 \frac d{dt} \bigl(P_{\ga_s[t,1]}  \nabla_{s} v(t,s)\bigr)
  =P_{\ga_s[t,1]} \big(\nabla_{t} \nabla_{s} v(t,s)\big) .
$$
Integrating with respect to $t$ and taking into account that $\nabla_s v(0,s)=0$ yield
$$
 \nabla_{s} P_{\ga_s}(v) =
  \nabla_{s} v(1,s) = \int_0^1 P_{\ga_s[t,1]} \big(\nabla_{t} \nabla_{s} v(t,s)\big) \,dt .
$$
Therefore, using the fact that $P_{\ga_s[t,1]}$ is unitary, we have
$$
 |\nabla_{s} P_{\ga_s}(v)| \le
  \int_0^1 \bigl|\nabla_{t} \nabla_{s} v(t,s)\bigr| \,dt 
  \le |v| \cdot \KE \cdot
 \sup_{t\in[0,1]} \left|\frac {d \ga_s(t)}{ds} \right|
 \cdot \int_0^1 \left| \frac{d\ga_s(t)}{dt} \right| \,dt ,
$$
where the second inequality follows from \eqref{e:variation-bound}.
This formula is exactly \eqref{e:parallel_estimate}.
\end{proof}

We need the following elementary fact from the linear algebra (e.g. \cite[\S 2.3]{BIK1}): If $S$ is a quadratic form on $\R^n$,
then
\be\label{e:Qoverball}
  \int_{B_\rho(0)} S(x) \,dx = \frac{\nu_n\rho^{n+2}}{n+2} \tr(S) .
\ee

In the following two subsections, we control the upper and lower bounds for $\til\la_k$ by following the method we established in \cite{BIK1}.

\subsection{Upper bound for $\widetilde\lambda_k$}

\begin{lemma} \label{l:Du-upper}
For any $u\in H^1(M,E)$, we have
$$
  D^{\rho}(u) \le J_{\max} \frac{\nu_n}{n+2} \, \rho^{n+2} \, \|\nabla u\|^2_{L^2}.
$$
\end{lemma}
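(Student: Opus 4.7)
My plan is to rewrite $\Gamma_{xy}(u)$ as an integral of $\nabla u$ along the minimizing geodesic from $x$ to $y$, and then to invoke invariance of the Liouville measure on $TM$ under the geodesic flow in order to reduce the resulting double integral to $\|\nabla u\|_{L^2}^2$, up to the correct normalization constant coming from \eqref{e:Qoverball}. First I would pass to exponential coordinates at $x$: write $y=\exp_x(v)$ with $v\in T_xM$, $|v|\le\rho$. The change of variables $y\mapsto v$ has Jacobian $J_x(v)\le J_{\max}$, so
$$
 D^\rho(u)\le J_{\max}\int_M\int_{|v|<\rho}|\Gamma_{x,\exp_x(v)}(u)|^2\,dv\,dx.
$$
Along the geodesic $\gamma(t):=\exp_x(tv)$, $t\in[0,1]$, set $\til u(t):=P_{x,\gamma(t)}\,u(\gamma(t))\in E_x$; then $\til u(0)=u(x)$, $\til u(1)=P_{xy}u(y)$, and by the defining property of parallel transport $\til u'(t)=P_{x,\gamma(t)}\bigl(\nabla_{\dot\gamma(t)}u\bigr)$, whose length equals $|\nabla_{\dot\gamma(t)}u|$ since $P$ is unitary. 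Writing $\Gamma_{xy}(u)=-\int_0^1\til u'(t)\,dt$ and applying Cauchy--Schwarz in the fiber $E_x$ yields $|\Gamma_{xy}(u)|^2\le\int_0^1|\nabla_{\dot\gamma(t)}u(\gamma(t))|^2\,dt$.

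Next I would push the $t$-integral outside, and for each fixed $t$ change variables via $(x,v)\mapsto(\gamma(t),\dot\gamma(t))$ on $TM$. This geodesic flow is Hamiltonian, hence preserves the Liouville measure, which in the orthonormal trivialization of $TM$ coincides with $dx\,dv$; it also preserves the set $\{|v|<\rho\}$ because geodesics have constant speed. Therefore the inner integral is $t$-independent and equals $\int_M\int_{|v|<\rho}|\nabla_v u(x)|^2\,dv\,dx$. For each fixed $x$, the map $v\mapsto|\nabla u(x)(v)|^2$ is a quadratic form on $T_xM$ whose trace equals $\|\nabla u(x)\|^2$ by \eqref{e:norm-nabla}, so \eqref{e:Qoverball} gives
$$
 \int_{|v|<\rho}|\nabla_v u(x)|^2\,dv=\tfrac{\nu_n\rho^{n+2}}{n+2}\,\|\nabla u(x)\|^2.
$$
Integrating over $M$ and over $t\in[0,1]$ produces the claimed inequality.

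The one step that is not entirely routine is the geodesic-flow invariance of $dx\,dv$ on $TM$; one can alternatively verify this by a direct Jacobian computation in a geodesic chart. I note that the cruder pointwise bound $|\nabla_{\dot\gamma(t)}u|^2\le|v|^2\|\nabla u(\gamma(t))\|^2$ would cost an extra factor of $n$ in the final constant, so the flow-invariance step (or equivalently the trace identity applied after a geodesic substitution) is what yields the sharp $\frac{\nu_n}{n+2}$ rather than $\frac{n\nu_n}{n+2}$.
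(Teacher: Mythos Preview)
Your proof is correct and follows essentially the same route as the paper's own argument: change of variables $y=\exp_x(v)$ to extract the factor $J_{\max}$, rewrite $\Gamma_{xy}(u)$ as an integral of parallel-transported $\nabla u$ along the geodesic, apply Cauchy--Schwarz, and then use invariance of the Liouville measure under the geodesic flow together with the trace identity \eqref{e:Qoverball}. The paper's proof is structured identically, so there is nothing to add.
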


\begin{proof}
This lemma is similar to Lemma 3.3 in \cite{BIK1} and the proof
is essentially the same.
We may assume that $u$ is smooth.
By substituting $y=\exp_x(v)$, we have
\begin{eqnarray*}
  \int_{B_\rho(x)} |\G_{xy}(u)|^2 \,dy
  &=& \int_{\mathcal{B}_\rho(0)\subset T_xM}  |\G_{x,\exp_x(v)}(u)|^2 J_x(v)\,dv \\
  &\le& J_{\max} \int_{\mathcal{B}_\rho(0)}  |\G_{x,\exp_x(v)}(u)|^2 \,dv .
\end{eqnarray*}
Hence
\be\label{e:DJmaxA}
  D^{\rho}(u) \le J_{\max} A,
\ee
where
$$
  A =  \int_M \int_{\mathcal{B}_\rho(0)\subset T_xM}  |\G_{x,\exp_x(v)}(u)|^2 \,dv dx .
$$
Note that the right-hand side is an integral with respect to the
Liouville measure on $TM$.
Let us estimate $A$.

For every constant-speed minimizing geodesic $\ga:[0,1]\to M$, we have
$$
  \G_{\ga(0)\ga(1)} = u(\ga(0)) - P_{\ga(0)\ga(1)} \big(u(\gamma(1))\big) 
  = - \int_0^1 \frac d{dt}  P_{\ga(0)\ga(t)} \big( u(\gamma(t)) \big) \, dt,
$$
and
$$
 \frac d{dt}  P_{\ga(0)\ga(t)} \big(u(\gamma(t)) \big) = P_{\ga(0)\ga(t)} \big( \nabla_{\dot\ga(t)} u \big)
$$
by the definition of the parallel transport $P$. Therefore,
$$
  |\G_{\ga(0)\ga(1)}| \le \int_0^1 \bigl| P_{\ga(0)\ga(t)} \big( \nabla_{\dot\ga(t)} u \big) \bigr|
    = \int_0^1 |\nabla_{\dot\ga(t)} u| \,dt,
$$
where the last equality is due to $P$ being unitary. Then,
\be\label{e:nabla-ineq}
  |\G_{\ga(0)\ga(1)}|^2 \le \int_0^1 |\nabla_{\dot\ga(t)} u|^2 \,dt .
\ee
For $x\in M$ and $v\in \mathcal{B}_\rho(0)\subset T_xM$, denote by $\ga_{x,v}$ the constant-speed geodesic
with the initial data $\ga_{x,v}(0)=x$ and $\dot\ga_{x,v}(0)=v$.
Equivalently, $\ga_{x,v}(t)=\exp_x(tv)$.
Applying \eqref{e:nabla-ineq} to $\ga_{x,v}$ yields
$$
  |\G_{x,\exp_x(v)}(u)|^2 \le \int_0^1 |\nabla_{\dot\ga_{x,v}(t)} u|^2 \,dt .
$$
This and the definition of $A$ imply that
$$
  A \le \int_0^1 f(t)\,dt,
$$
where
$$
  f(t)=\int_M \int_{\mathcal{B}_\rho(0)\subset T_xM}  |\nabla_{\dot\ga_{x,v}(t)}u|^2 \,dv dx .
$$
Note that $\dot\ga_{x,v}(t)$ is the image of $v$ under the time $t$ map of the geodesic flow.
Since the geodesic flow preserves the Liouville measure and the subset $\{(x,v)\in TM: v\in \mathcal{B}_{\rho}(0)\subset T_x M\}$, $f(t)$ does not depend on $t$.
Hence,
$$
  A \le f(0) = \int_M \int_{\mathcal{B}_\rho(0)\subset T_x M}  |\nabla_{v}u|^2 \,dv dx 
  = \int_M \frac{\nu_n\rho^{n+2}}{n+2}  \|\nabla u(x)\|^2 \, dx
  = \frac{\nu_n\rho^{n+2}}{n+2}  \|\nabla u\|^2_{L^2},
$$
where the second equality follows from \eqref{e:Qoverball}.
This and \eqref{e:DJmaxA} yield the lemma.
\end{proof}

The lemma above gives an upper bound for $\til\la_k$.

\begin{proposition}
\label{p:wtla-upper-bound}
For every $k\in\N_+$, we have
$$
  \til\la_k \le J_{\max} \la_k \le (1+C_n\KM\rho^2) \la_k .
$$
\end{proposition}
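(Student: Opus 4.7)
The plan is to deduce the proposition directly from Lemma \ref{l:Du-upper} via the min-max characterizations \eqref{e:minmax-tilde} and \eqref{e:minmax-smooth}. No new analytic work is needed; the content is a one-line comparison of Rayleigh quotients made uniform over $k$-dimensional subspaces.

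Here is the sequence of steps I would carry out. First, fix $k\in\N_+$ and let $L\subset H^1(M,E)$ be an arbitrary $k$-dimensional subspace. Since $H^1(M,E)\subset L^2(M,E)$, this $L$ is a legitimate trial subspace in the min-max formula \eqref{e:minmax-tilde} for $\til\la_k$ as well. Lemma \ref{l:Du-upper} gives, for every $u\in L\setminus\{0\}$,
$$
 \frac{D^\rho(u)}{\|u\|_{L^2}^2} \le J_{\max}\,\frac{\nu_n\rho^{n+2}}{n+2}\,\frac{\|\nabla u\|_{L^2}^2}{\|u\|_{L^2}^2}.
$$
Taking the supremum over $u\in L\setminus\{0\}$ on both sides preserves the inequality since the constant on the right is independent of $u$. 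Then taking the infimum over all $k$-dimensional subspaces $L\subset H^1(M,E)$, the right-hand side becomes $J_{\max}\,\frac{\nu_n\rho^{n+2}}{n+2}\,\la_k$ by \eqref{e:minmax-smooth}, while the left-hand side is bounded below by $\frac{\nu_n\rho^{n+2}}{n+2}\,\til\la_k$ because \eqref{e:minmax-tilde} takes its infimum over the larger class of $k$-dimensional subspaces of $L^2(M,E)$. Cancelling the common factor $\frac{\nu_n\rho^{n+2}}{n+2}$ yields $\til\la_k \le J_{\max}\,\la_k$.

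The second inequality $J_{\max}\la_k \le (1+C_n\KM\rho^2)\la_k$ is a direct invocation of the Rauch-type bound \eqref{e:rauch}. The only point worth checking is the direction of the min-max comparison: because the trial class for $\til\la_k$ is $L^2(M,E)$ and that for $\la_k$ is $H^1(M,E)\subset L^2(M,E)$, restricting to the smaller class can only increase the infimum, which is exactly the direction we need for an upper bound on $\til\la_k$. There is no serious obstacle here; the main content of the proposition lives entirely in Lemma \ref{l:Du-upper}, whose proof uses the geodesic flow invariance of the Liouville measure together with \eqref{e:Qoverball}.
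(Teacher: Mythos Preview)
Your proposal is correct and matches the paper's own proof: both derive the first inequality by combining Lemma \ref{l:Du-upper} with the min-max formulae \eqref{e:minmax-smooth} and \eqref{e:minmax-tilde}, and the second inequality from \eqref{e:rauch}. Your write-up simply spells out the subspace comparison that the paper leaves implicit.
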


\begin{proof}
The second inequality follows from \eqref{e:rauch}.
The first one follows immediately from combining
the min-max formulae 
\eqref{e:minmax-smooth} and \eqref{e:minmax-tilde}
and Lemma \ref{l:Du-upper}.
\end{proof}

\smallskip
\subsection{Lower bound for $\widetilde\lambda_k$}

As in \cite[Section 5]{BIK1}, define $\psi:\R_{\geq 0}\to\R_{\geq 0}$ by
$$
  \psi(t) = \begin{cases}
  \frac{n+2}{2\nu_n} (1-t^2), & 0\le t\le 1, \\
  0, & t\ge 1 .
  \end{cases}
$$
The normalization constant $\frac{n+2}{2\nu_n}$ is chosen so that
$\int_{\R^n} \psi(|x|)\,dx=1$.

We define $k_{\rho}:M\times M\to\R_{\geq 0}$ by
\be\label{e:smoothing-beta}
  k_{\rho}(x,y) = \rho^{-n} \psi \big(\frac{d(x,y)}{\rho} \big) ,
\ee
and $\theta:M\to\R_{\geq 0}$ by
\be\label{e:smoothing-alpha}
  \theta(x) = \int_M k_{\rho}(x,y) \,dy = \int_{B_\rho(x)} k_{\rho}(x,y) \,dy .
\ee
(The second identity follows from the fact that $k_{\rho}(x,y)=0$
if $d(x,y)\ge\rho$.)


We need the following estimates on $\theta$:
\be\label{e:theta-bound}
  J_{\min} \le \theta(x) \le J_{\max}
\ee
and
\be\label{e:dtheta-bound}
  |d_x \theta| \le C_n\KM \rho
\ee
for all $x\in M$. See \cite[Lemma 5.1]{BIK1} for a proof.

Define a convolution operator $I:L^2(M,E)\to C^{0,1}(M,E)$ by
$$
  Iu(x) 
  = \frac 1{\theta(x)}\int_M k_{\rho}(x,y) P_{xy}(u(y)) \,dy
  = \frac 1{\theta(x)}\int_{B_\rho(x)} k_{\rho}(x,y) P_{xy}(u(y)) \,dy,
$$
for $x\in M$ (compare with \cite[Definition 5.2]{BIK1}). We estimate the energy and the $L^2$-norm of $Iu$ in the following two lemmas.

\begin{lemma}
\label{l:Iu-lower-bound}
For any $u\in L^2(M,E)$, we have
$$
  \|Iu\|^2_{L^2} \ge \frac{J_{\min}}{J_{\max}}\, \|u\|^2_{L^2} - \frac{n+2}{2\nu_n\rho^n} D^{\rho}(u) .
$$
\end{lemma}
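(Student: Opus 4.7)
The plan is to exploit the fact that $Iu(x)$ is a weighted average of $P_{xy}u(y)$ with weight $k_\rho(x,y)/\theta(x)$, and $P_{xy}$ is unitary. First I would rewrite $P_{xy}u(y) = u(x) - \Gamma_{xy}(u)$, so that $Iu(x) = u(x) - R(x)$ with $R(x) := \theta(x)^{-1}\int k_{\rho}(x,y)\Gamma_{xy}(u)\,dy$, and observe the ``mean zero'' property $\int k_{\rho}(x,y)\,P_{xy}u(y)\,dy = \theta(x) Iu(x)$. Using that $|P_{xy}u(y)|^2 = |u(y)|^2$, I would expand $|P_{xy}u(y)|^2 = |Iu(x) + (P_{xy}u(y)-Iu(x))|^2$ and integrate against $k_{\rho}(x,y)\,dy$; the cross term vanishes by the mean-zero property, producing the identity
\be\label{e:Iu-identity}
  \int k_{\rho}(x,y)\,|u(y)|^2\,dy = \theta(x)\,|Iu(x)|^2 + \int k_{\rho}(x,y)\,|P_{xy}u(y)-Iu(x)|^2\,dy.
\ee

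Next I would bound the last integrand on the right of \eqref{e:Iu-identity}. Since $P_{xy}u(y)-Iu(x) = R(x) - \Gamma_{xy}(u)$, expanding the square and using $\int k_\rho(x,y)\Gamma_{xy}(u)\,dy = \theta(x)R(x)$ collapses the cross term and gives
$$
  \int k_{\rho}(x,y)\,|P_{xy}u(y)-Iu(x)|^2\,dy = -\theta(x)|R(x)|^2 + \int k_{\rho}(x,y)\,|\Gamma_{xy}(u)|^2\,dy,
$$
which is at most $\int k_{\rho}(x,y)\,|\Gamma_{xy}(u)|^2\,dy$. Substituting back into \eqref{e:Iu-identity} yields the pointwise inequality
$$
  \theta(x)\,|Iu(x)|^2 \;\geq\; \int k_{\rho}(x,y)\,|u(y)|^2\,dy \;-\; \int k_{\rho}(x,y)\,|\Gamma_{xy}(u)|^2\,dy.
$$

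Finally I would integrate over $x\in M$ and estimate each piece. The kernel $k_{\rho}$ is symmetric in its arguments, so Fubini and \eqref{e:theta-bound} give $\int\!\!\int k_{\rho}(x,y)|u(y)|^2\,dy\,dx = \int |u(y)|^2\theta(y)\,dy \geq J_{\min}\|u\|_{L^2}^2$. The pointwise bound $k_{\rho}(x,y) \leq \tfrac{n+2}{2\nu_n\rho^n}$ (the maximum of $\psi$ at $0$) together with the definition \eqref{e:defD} of $D^{\rho}$ yields $\int\!\!\int k_{\rho}|\Gamma_{xy}(u)|^2\,dy\,dx \leq \tfrac{n+2}{2\nu_n\rho^n}D^{\rho}(u)$. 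On the left, $\theta(x)\leq J_{\max}$ gives $\int \theta(x)|Iu(x)|^2\,dx \leq J_{\max}\|Iu\|_{L^2}^2$. Combining these and dividing by $J_{\max}\geq 1$ produces the claimed inequality.

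The step I expect to be slightly delicate is the vanishing of the cross term in \eqref{e:Iu-identity}: it is exactly where the unitarity of $P_{xy}$ and the defining property of the convolution $I$ conspire, and the whole proof hinges on this cancellation. Everything afterwards is bookkeeping with the Rauch-type bounds $J_{\min}\leq \theta\leq J_{\max}$ and the explicit sup bound on $\psi$.
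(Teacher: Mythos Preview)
Your proof is correct and is essentially the paper's argument, unpacked by hand. The paper writes $Iu = u - \Delta^\rho_{\theta,k_\rho}u$ and expands $\tnorm{Iu}_\theta^2 = \tnorm{u}_\theta^2 - 2D_{k_\rho}^\rho(u,u) + \tnorm{\Delta^\rho_{\theta,k_\rho}u}_\theta^2$ via the self-adjointness relation \eqref{e:alpha-beta-self-adjoint}, then drops the last (nonnegative) term and applies \eqref{e:theta-bound}; your variance identity and the cross-term cancellation you highlight are exactly this expansion written out pointwise, with your $R(x)$ equal to $\Delta^\rho_{\theta,k_\rho}u(x)$ and the discarded term $\theta(x)|R(x)|^2$ integrating to $\tnorm{\Delta^\rho_{\theta,k_\rho}u}_\theta^2$.
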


\begin{proof}
Consider the weighted $\rho$-connection Laplacian $\Delta^\rho_{\theta,k_{\rho}}$ defined in Section \ref{sec:general}.
By \eqref{e:mm-laplace},
$$
 \Delta^\rho_{\theta,k_{\rho}} u(x) 
 = \frac {1}{\theta(x)}\int_{B_\rho(x)} k_{\rho}(x,y)\bigl(u(x)-P_{xy}(u(y))\bigr) \,dy 
 = u(x)-Iu(x) .
$$
where the second equality follows from the definition \eqref{e:smoothing-alpha}.
Equivalently, $Iu=u-\Delta^\rho_{\theta,k_{\rho}} u$. Therefore by \eqref{e:alpha-beta-self-adjoint},
$$
  \tnorm{Iu}_\theta^2
  = \tnorm{u}_\theta^2 - 2\tprod{\Delta^\rho_{\theta,k_{\rho}} u, u}_\theta
   + \tnorm{\Delta^\rho_{\theta,k_{\rho}} u}_\theta^2
  \ge \tnorm{u}_\theta^2 - 2\tprod{\Delta^\rho_{\theta,k_{\rho}} u, u}_\theta
  = \tnorm{u}_\theta^2 - 2 D_{k_{\rho}}^{\rho}(u,u).
$$
Observe that
$$
  D_{k_{\rho}}^{\rho}(u,u) \le \frac12 \max_{x,y} k_{\rho}(x,y) \cdot D^{\rho}(u) \le \frac{n+2}{4\nu_n\rho^n} D^{\rho}(u) .
$$
Thus
$$
   \tnorm{Iu}_\theta^2 \ge  \tnorm{u}_\theta^2 - \frac{n+2}{2\nu_n\rho^n} D^{\rho}(u) .
$$
Then the lemma follows from the inequality above, $J_{\max}\ge 1$, and the following trivial estimates:
$$
 \tnorm{Iu}_\theta^2 \le \max_x\theta(x) \cdot \|Iu\|^2_{L^2} \le J_{\max} \|Iu\|^2_{L^2} ,
$$
and 
$$
 \tnorm{u}_\theta^2 \ge \min_x\theta(x) \cdot \|u\|^2_{L^2} \ge J_{\min} \|u\|^2_{L^2} ,
$$
due to \eqref{e:theta-bound}.
\end{proof}

\begin{lemma}
\label{l:nablaIu-upper-bound}
For any $u\in L^2(M,E)$, we have
$$
  \|\nabla Iu\|_{L^2} \le \left(1+C_n\KM\rho^2\right) \sqrt{\frac{n+2}{\nu_n\rho^{n+2}} D^{\rho}(u)} 
  + C_nK_E\,\rho \,\|u\|_{L^2} .
$$
\end{lemma}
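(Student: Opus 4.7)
The plan is to compute $\nabla Iu$ pointwise, decompose it into three pieces, and estimate each in $L^2$. I would differentiate $Iu(x)=\theta(x)^{-1}\int k_\rho(x,y)P_{xy}(u(y))\,dy$ by the product rule. The derivative of the quotient contributes a $d\theta/\theta$ factor; differentiating the integrand produces both a $d_xk_\rho$ term and a $\nabla_x P_{xy}$ term. Using the identity $\int_M d_xk_\rho(x,y)\,dy=d\theta(x)$ to trade $P_{xy}u(y)$ for $P_{xy}u(y)-u(x)=-\Gamma_{xy}(u)$ in the $d_xk_\rho$ integral, I arrive at the decomposition $\nabla Iu=T_1+T_2+T_3$ with
\[
 T_1(x) = -\tfrac{1}{\theta(x)}\!\int d_xk_\rho(x,y)\otimes\Gamma_{xy}(u)\,dy,\qquad
 T_2(x) = \tfrac{d\theta(x)}{\theta(x)}\otimes(u(x)-Iu(x)),
\]
and $T_3(x)=\theta(x)^{-1}\int k_\rho(x,y)\,\nabla_x[P_{xy}u(y)]\,dy$, where $\nabla_x$ is the covariant derivative of the $E$-valued map $x\mapsto P_{xy}u(y)$ at fixed $y$. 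I would then estimate the three $L^2$-norms separately and combine by the triangle inequality.

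For the main term $T_1$, work in normal coordinates at $x$ with $y=\exp_x v$: using $\psi'(t)=-\tfrac{n+2}{\nu_n}t$, one computes $d_xk_\rho(x,y)=-\rho^{-n-1}\psi'(|v|/\rho)v/|v|$, so $T_1(x)$ becomes a constant multiple of $\int_{\mathcal{B}_\rho(0)}v\otimes\Gamma_{x,\exp_x v}(u)\,J_x(v)\,dv$. A plain coordinate-wise Cauchy--Schwarz $|\int v_i\Gamma J\,dv|^2\le(\int v_i^2J\,dv)(\int|\Gamma|^2J\,dv)$ is too crude: it loses a factor $\sqrt n$ that cannot be absorbed into $(1+C_nK_M\rho^2)$. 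Instead, I apply Bessel's inequality in $L^2(\mathcal{B}_\rho(0),J_x\,dv)$ to the linear forms $\{v_i\}$: these are orthogonal in the flat measure $dv$ with common squared norm $\nu_n\rho^{n+2}/(n+2)$, and their Gram matrix against $J_x\,dv$ differs from $\tfrac{\nu_n\rho^{n+2}}{n+2}I$ by $O(K_M\rho^2)$ in operator norm, thanks to $J_x=1+O(K_M|v|^2)$ from \eqref{e:rauch}. This yields
\[
 \sum_i\Big|\int v_i\,\Gamma_{x,\exp_x v}(u)\,J_x(v)\,dv\Big|^2\le\tfrac{\nu_n\rho^{n+2}}{n+2}(1+C_nK_M\rho^2)\int_{B_\rho(x)}|\Gamma_{xy}(u)|^2\,dy,
\]
and integrating over $x$ gives $\|T_1\|_{L^2}^2\le(1+C_nK_M\rho^2)\,\tfrac{n+2}{\nu_n\rho^{n+2}}D^\rho(u)$.

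For $T_2$, the estimate $|d\theta|\le C_nK_M\rho$ from \eqref{e:dtheta-bound} combined with Cauchy--Schwarz on $\theta(u-Iu)=\int k_\rho\Gamma_{xy}(u)\,dy$ gives $\|u-Iu\|_{L^2}^2\le C_n\rho^{-n}D^\rho(u)$, so $\|T_2\|_{L^2}^2\le C_nK_M^2\rho^4\cdot\tfrac{n+2}{\nu_n\rho^{n+2}}D^\rho(u)$, which is absorbed into the $(1+C_nK_M\rho^2)$ factor. For $T_3$, I apply Lemma~\ref{l:parallel} to the family of minimizing geodesics from the fixed point $y$ to $x(s)=\exp_x(sw)$: the length is at most $\rho$, and Jacobi-field comparison gives $\sup_t|d\gamma_s/ds|\le(1+C_nK_M\rho^2)|w|$, whence $|\nabla_w(P_{xy}u(y))|\le C_nK_E\rho\,|u(y)|\,|w|$. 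Thus $|T_3(x)|\le C_nK_E\rho\cdot\theta(x)^{-1}\int k_\rho(x,y)|u(y)|\,dy$, and the averaging operator on the right is $L^2$-bounded with norm close to $1$ (by duality, using $\int k_\rho(x,y)\,dx\le J_{\max}$), so $\|T_3\|_{L^2}\le C_nK_E\rho\|u\|_{L^2}$. Summing, the triangle inequality produces the stated bound.

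The main obstacle is obtaining the sharp leading constant in the estimate of $T_1$: a coordinate-wise Cauchy--Schwarz loses a factor $\sqrt n$ that does not fit inside $(1+C_nK_M\rho^2)$, so the proof must exploit the mutual orthogonality of the linear forms $v\mapsto v_i$ on $B_\rho(0)$ via Bessel's inequality, and carefully control the $O(K_M\rho^2)$ perturbation of this orthogonality introduced by the Jacobian $J_x$.
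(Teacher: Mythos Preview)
Your proposal is correct and follows essentially the same route as the paper. Your decomposition $T_1+T_2+T_3$ coincides with the paper's $A_1+A_2+A_3$ (once one unwinds $\int k_\rho\,\Gamma_{xy}(u)\,dy=\theta(x)(u(x)-Iu(x))$), and the estimates for $T_2$ and $T_3$ match the paper's arguments for $A_2$ and $A_3$ almost verbatim. For the main term, the paper packages your ``Bessel in $L^2(J_x\,dv)$ with perturbed Gram matrix'' as a short sublemma in the \emph{flat} measure, namely $\bigl|\int_{\mathcal B_\rho(0)} f(v)\,v\,dv\bigr|^2\le \tfrac{\nu_n\rho^{n+2}}{n+2}\int f^2\,dv$, which it then applies to $f_jJ_x$ and absorbs the extra Jacobian via $J_x^2\le J_{\max}J_x$; this is equivalent to your weighted-Bessel argument and yields the same leading constant.
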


\begin{proof}
We rewrite the definition of $I$ as
\be\label{e:I1}
 Iu(x) = \int_{M} \wt{k_{\rho}}(x,y) P_{xy}(u(y)) \,dy,
\ee
where
$$
 \wt{k_{\rho}}(x,y) = \theta(x)^{-1} k_{\rho}(x,y) .
$$
Note that for any $x\in M$,
$$ 
 \int_M \wt{k_{\rho}}(x,y)\,dy = 1 .
$$
Differentiating this identity, we get
\be\label{e:dtildebeta}
 \int_M d_x\wt{k_{\rho}}(x,y)\,dy = 0,
\ee
where $d_x$ denotes the differential with respect to~$x$.
Differentiating \eqref{e:I1} yields
$$
 \nabla Iu(x) = A_0(x)+A_3(x),
$$
where
$$
 A_0(x) = \int_M d_x\wt{k_{\rho}}(x,y)\otimes P_{xy}(u(y))\,dy
 = \int_{B_\rho(x)} d_x\wt{k_{\rho}}(x,y)\otimes P_{xy}(u(y))\,dy,
$$
and
$$
  A_3(x) = \int_M\wt{k_{\rho}}(x,y) \, \nabla V_{y,u}(x) \,dy
  = \int_{B_\rho(x)}\wt{k_{\rho}}(x,y) \, \nabla V_{y,u}(x) \,dy.
$$
In the above, $V_{y,u}(\cdot)$ is the section of $E|_{B_\rho(y)}$ defined by
\begin{equation}\label{def-Vyu}
 V_{y,u}(z) = P_{zy}(u(y)) .
\end{equation}
For better understanding, observe that $\nabla V_{y,u}=0$ if $\nabla$ is a flat connection.
In general, we have an estimate
$$
  \|\nabla V_{y,u}(x)\| \le C_n \KE\, d(x,y)\, {|u(y)|} \le C_n \KE\, \rho\, {|u(y)|}  ,
$$
where the norm at the left-hand side is defined by \eqref{e:norm-nabla}.
Indeed, for any unit vector $w\in T_xM$, we have
\begin{equation}
\label{error}
  |\nabla_w V_{y,u}(x)| \le  \KE\, d(x,y)\, {|u(y)|}\, |w| =\KE\, d(x,y)\, {|u(y)|} .
\end{equation}
This follows from Lemma \ref{l:parallel} applied to $u(y)$ in place of $v$, and applied to
a family of minimizing
geodesics from $y$ to points $x(s)$ where $x(0)=x$ and $\dot x(0)=w$.
Note that due to the curvature bound $\textrm{Sec}_M\leq K_M$, the Rauch comparison theorem implies that $\sup_t\big|\frac {d \ga_s(t)}{ds} \big|$ in 
\eqref{e:parallel_estimate} is attained at $t=1$ when $\rho$ satisfies \eqref{e:rho-bound}, see e.g. \cite[Chapter 4, Corollary 2.8(1)]{S}.

Hence by the Cauchy-Schwarz inequality,
$$
 \|A_3(x)\| \le C_n\KE\rho\int_{B_\rho(x)} \wt{k_{\rho}}(x,y) |u(y)| \, dy
 \le C_n\KE\, \rho^{1-\frac{n}{2}} \left(\int_{B_\rho(x)} |u(y)|^2 \, dy\right)^{\frac12},
$$
where we used the estimates $\wt{k_{\rho}}(x,y)\le C_n\rho^{-n}$ and
$\vol(B_\rho(x))\le C_n\rho^n$ due to \eqref{e:rauch}. Thus
\be\label{e:A3bound}
 \|A_3\|_{L^2} = \left(\int_M \|A_3(x)\|^2\,dx\right)^{\frac12} \le C_nK_E\,\rho \,\|u\|_{L^2}.
\ee

Next we turn to $A_0$. Using \eqref{e:dtildebeta},
$$
 A_0(x)= \int_{B_\rho(x)} d_x\wt{k_{\rho}}(x,y)\otimes \bigl(P_{xy}(u(y))-u(x)\bigr)\,dy
 = -\int_{B_\rho(x)} d_x\wt{k_{\rho}}(x,y)\otimes \G_{xy}(u)\,dy .
$$
Since $d_x\wt{k_{\rho}}(x,y) = \theta(x)^{-1} d_x k_{\rho}(x,y) - \theta(x)^{-2} d_x\theta(x) \cdot k_{\rho}(x,y)$, we split $A_0$ into two terms:
$$
 A_0(x) = A_1(x) + A_2(x),
$$
where
$$
 A_1(x) = -\theta(x)^{-1}  \int_{B_\rho(x)} d_x k_{\rho}(x,y)\otimes \G_{xy}(u)\,dy,
$$
and
$$
 A_2(x) = \theta(x)^{-2} d_x\theta(x) \otimes \int_{B_\rho(x)} k_{\rho}(x,y) \G_{xy}(u) \,dy.
$$
In view of \eqref{e:rauch}, \eqref{e:theta-bound} and \eqref{e:dtheta-bound}, 
$$
  \|A_2(x)\| \le C_n\KM\rho^{1-\frac{n}{2}} \left(\int_{B_\rho(x)} |\G_{xy}(u)|^2\,dy\right)^{\frac12}.
$$
This inequality and the definition \eqref{e:defD} of $D^{\rho}(u)$ yield that
\be\label{e:A2bound}
 \|A_2\|_{L^2} \le C_n\KM \, \rho^{1-\frac{n}{2}} \sqrt{D^{\rho}(u)} = C_n\KM\, \rho^2 \, \sqrt {\frac1{\rho^{n+2}} D^{\rho}(u)}\, .
\ee

Now we estimate $A_1$. From the definition of $k_{\rho}$, for $w\in T_xM$, we have
$$
 d_x k_{\rho}(x,y)\cdot w = \frac{n+2}{\nu_n\rho^{n+2}}\, \langle \exp_x^{-1}(y), w\rangle
$$
where $\langle\, ,\rangle$ is the Riemannian inner product in $T_xM$.
Substituting this into the formula for $A_1$ yields
$$
 A_1(x)\cdot w = -\theta(x)^{-1} \frac{n+2}{\nu_n\rho^{n+2}} \int_{B_\rho(x)}
  \langle \exp_x^{-1}(y), w\rangle \, \G_{xy}(u)\,dy .
$$
Using the substitution $y=\exp_x(v)$, we get
\be\label{e:A1a}
 A_1(x)\cdot w = -\theta(x)^{-1} \frac{n+2}{\nu_n\rho^{n+2}} 
 \int_{\mathcal{B}_\rho(0)\subset T_xM} \langle v,w\rangle\, \phi(v) J_x(v)\, dv,
\ee
where
$$
 \phi(v) = \G_{x,\exp_x(v)}(u) \in E_x .
$$

To proceed we need the following sublemma.
\begin{sublemma}
For any $L^2$ function $f:\R^n\to\R$, one has
$$
 \bigg| \int_{\mathcal{B}_\rho(0)\subset\R^n} f(v)\, v \,dv \bigg|^2 
 \le \frac{\nu_n\rho^{n+2}}{n+2} \int_{\mathcal{B}_\rho(0)} f(v)^2 \,dv .
$$
\end{sublemma}

\begin{proof}
Denote
$$
 F = \int_{\mathcal{B}_\rho(0)\subset\R^n} f(v)\, v \,dv \in\R^n.
$$
Let $v_0\in\R^n$ be the unit vector in the direction of $F$.
Then
$$
  |F| = \langle F,v_0\rangle = \int_{\mathcal{B}_\rho(0)} f(v) \langle v,v_0\rangle \, dv .
$$
Therefore 
$$
 |F|^2 \le \Big(\int_{\mathcal{B}_\rho(0)} f(v)^2\, dv\Big)
 \Big( \int_{\mathcal{B}_\rho(0)} \langle v,v_0\rangle^2\, dv \Big) .
$$
Using \eqref{e:Qoverball}, the second integral equals to $\frac{\nu_n\rho^{n+2}}{n+2}$.
The sublemma follows.
\end{proof}

We use the sublemma and \eqref{e:A1a} to estimate $A_1$.
Fix $x\in M$ and an orthonormal basis $\zeta_1,\dots,\zeta_m\in E_x$,
where $m=\dim E_x$.
Then $\phi(v)=\sum_j \big(f_j(v) + i \,g_j(v)\big) \zeta_j$ 
for some functions $f_j,g_j\co \mathcal{B}_\rho(0)\to\R$,
$j=1,\dots,m$.
Then the formula \eqref{e:A1a} takes the form
$$
 A_1(x) \cdot w = -\theta(x)^{-1} \frac{n+2}{\nu_n\rho^{n+2}} 
 \sum_{j=1}^m 
 \bigl( \left\langle a_j , w \right\rangle
 + i \left\langle b_j , w \right\rangle
 \bigr) \zeta_j,
$$
for any $w\in T_xM$,
where $a_j,b_j\in T_xM$ are given by
$$
 a_j = \int_{\mathcal{B}_\rho(0)} f_j(v) v J_x(v)\, dv,\qquad
 b_j = \int_{\mathcal{B}_\rho(0)} g_j(v) v J_x(v)\, dv .
$$
Hence by using \eqref{e:norm-nabla},
$$
  \|A_1(x)\|^2 = \left(\theta(x)^{-1} \frac{n+2}{\nu_n\rho^{n+2}}\right)^2 \ 
  \sum_{j=1}^m (a_j^2+b_j^2).
$$
Applying the Sublemma to $f_j J_x$ and $g_j J_x$ in place of $f$, we obtain
$$
 a_j^2 + b_j^2 
 \le \frac{\nu_n\rho^{n+2}}{n+2} 
 \int_{\mathcal{B}_\rho(0)} \big(f_j(v)^2+g_j(v)^2\big) J_x(v)^2 \,dv .
$$
Thus,
\begin{eqnarray*}
 \|A_1(x)\|^2 &\le& \theta(x)^{-2}
 \frac{n+2}{\nu_n\rho^{n+2}} 
 \int_{\mathcal{B}_\rho(0)} |\phi(v)|^2 J_x(v)^2 \,dv \\
 &\le& \theta(x)^{-2} \frac{n+2}{\nu_n\rho^{n+2}} J_{\max}
 \int_{B_\rho(x)} |\G_{xy}(u)|^2 \,dy ,
\end{eqnarray*}
where we used
again the substitution $y=\exp_x(v)$ and the definition of $\phi$ in the last inequality.

By \eqref{e:theta-bound} and \eqref{e:rauch}, 
we have $\theta(x)^{-2}J_{\max}\le 1+C_n\KM\rho^2$.
Hence,
$$
 \|A_1(x)\|^2 \le (1+C_n\KM\rho^2) \frac{n+2}{\nu_n\rho^{n+2}} 
 \int_{B_\rho(x)} |\G_{xy}(u)|^2 \,dy .
$$
Integrating over $M$ yields that
\be\label{e:A1bound}
 \|A_1\|_{L^2} \le (1+C_n\KM\rho^2) \sqrt{\frac{n+2}{\nu_n\rho^{n+2}} D^{\rho}(u)}\, .
\ee
From $\nabla Iu=A_1+A_2+A_3$, the lemma
follows from \eqref{e:A1bound}, \eqref{e:A2bound}, and \eqref{e:A3bound}.
\end{proof}

The lower bound for $\til\la_k$ is a consequence of Lemma \ref{l:Iu-lower-bound} and Lemma \ref{l:nablaIu-upper-bound}.

\begin{proposition}\label{p:wtla-lower-bound}
For every $k\in\N_+$ satisfying $\wt\la_k\le  \sigma_n \rho^{-2}$, we have
$$
 \la_k^{1/2} \le \left(1+C_n\KM\rho^2+\wt\la_k \rho^2 \right)\wt\la_k^{1/2} + C_n\KE\rho \, .
$$
\end{proposition}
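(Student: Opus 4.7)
The plan is to use the convolution operator $I$ defined in the subsection above to transfer a nearly-optimal $k$-dimensional subspace for the min-max characterization \eqref{e:minmax-tilde} of $\wt\la_k$ into a $k$-dimensional subspace of $H^1(M,E)$, and then apply the min-max formula \eqref{e:minmax-smooth} for $\la_k$. The two lemmas proved just above are precisely what is needed: Lemma \ref{l:nablaIu-upper-bound} controls the energy $\|\nabla Iu\|_{L^2}$ of the transferred section in terms of $D^\rho(u)$, while Lemma \ref{l:Iu-lower-bound} guarantees that $I$ does not collapse the subspace.

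More concretely, fix an arbitrary $\e>0$ and pick a $k$-dimensional subspace $L\subset L^2(M,E)$ such that every nonzero $u\in L$ satisfies
$$
 \frac{n+2}{\nu_n\rho^{n+2}}\, D^\rho(u) \le (\wt\la_k+\e)\,\|u\|_{L^2}^2 ,
$$
which exists by \eqref{e:minmax-tilde}. Substituting this bound into Lemma \ref{l:Iu-lower-bound} and using $J_{\min}/J_{\max}\ge 1-C_n\KM\rho^2$, one obtains
$$
 \|Iu\|_{L^2}^2 \ge \Big(1-C_n\KM\rho^2-\tfrac{\rho^2}{2}(\wt\la_k+\e)\Big)\,\|u\|_{L^2}^2 .
$$
Under the standing hypothesis $\wt\la_k\rho^2\le 1$ (together with $\rho$ small), the coefficient on the right is strictly positive, so $I|_L$ is injective and $I(L)$ is a $k$-dimensional subspace of $C^{0,1}(M,E)\subset H^1(M,E)$.

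Next, substituting the same Rayleigh bound into Lemma \ref{l:nablaIu-upper-bound} gives
$$
 \|\nabla Iu\|_{L^2} \le (1+C_n\KM\rho^2)(\wt\la_k+\e)^{1/2}\,\|u\|_{L^2} + C_n\KE\rho\,\|u\|_{L^2} .
$$
Dividing by $\|Iu\|_{L^2}$ and using the lower bound above together with the elementary inequality $(1-t)^{-1/2}\le 1+t$ for $t\in[0,1/2]$, applied with $t=C_n\KM\rho^2+\tfrac{\rho^2}{2}(\wt\la_k+\e)$, one arrives at
$$
 \frac{\|\nabla Iu\|_{L^2}}{\|Iu\|_{L^2}} \le \big(1+C_n\KM\rho^2+\wt\la_k\rho^2\big)(\wt\la_k+\e)^{1/2} + C_n\KE\rho ,
$$
after absorbing lower-order terms into the constants. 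Taking the supremum over $u\in L\setminus\{0\}$ and applying \eqref{e:minmax-smooth} to the subspace $I(L)\subset H^1(M,E)$ yields
$$
 \la_k^{1/2} \le \big(1+C_n\KM\rho^2+\wt\la_k\rho^2\big)(\wt\la_k+\e)^{1/2} + C_n\KE\rho .
$$
Letting $\e\to 0$ gives the claim.

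The only real work is arithmetic bookkeeping: verifying that when $\wt\la_k\rho^2\le 1$ and $\rho$ is small (which follows from \eqref{e:rho-bound}), the denominator $\|Iu\|_{L^2}^2/\|u\|_{L^2}^2$ is bounded below by a universal constant so that injectivity of $I|_L$ is genuine, and that the $(1-t)^{-1/2}$ expansion can be combined cleanly with the $(1+C_n\KM\rho^2)$ factor from the numerator to produce the stated factor $(1+C_n\KM\rho^2+\wt\la_k\rho^2)$. No essentially new analytic idea is needed beyond what is already packaged in Lemmas \ref{l:Iu-lower-bound} and \ref{l:nablaIu-upper-bound}.
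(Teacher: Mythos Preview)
Your proposal is correct and follows essentially the same route as the paper's own proof: transfer a near-optimal $k$-dimensional subspace through $I$, use Lemma \ref{l:Iu-lower-bound} to ensure injectivity and Lemma \ref{l:nablaIu-upper-bound} to bound the energy, then apply \eqref{e:minmax-smooth}. The only cosmetic difference is that the paper works directly with the span of the first $k$ eigen-sections of $\Delta^\rho$ (which exist since $\wt\la_k\le\rho^{-2}<\wt\la_\infty$), avoiding your $\e$-approximation and the final limit $\e\to 0$; the subsequent arithmetic is identical.
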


\begin{proof}
Let $u_1,\dots, u_k\in L^2(M,E)$ be the first $k$ eigen-sections of $\Delta^\rho$
corresponding to $\wt\la_1,\dots,\wt\la_k$.
Let $\wt L$ be the linear span of $u_1,\dots,u_k$.
Then $\wt L$ realizes the infimum in the min-max formula \eqref{e:minmax-tilde}.
Hence for all $u\in\wt L$,
\be\label{e:Dlewtla}
 D^{\rho}(u) \le \frac{\nu_n\rho^{n+2}}{n+2} \wt\la_k \|u\|^2_{L^2}.
\ee
This and Lemma \ref{l:Iu-lower-bound} imply that
\be\label{e:Iu2}
  \|Iu\|^2_{L^2} \ge \frac{J_{\min}}{J_{\max}}\, \|u\|^2_{L^2} - \frac{n+2}{2\nu_n\rho^n} D^{\rho}(u) 
  \ge \left( \frac{J_{\min}}{J_{\max}} - \frac12\rho^2 \wt\la_k \right) \|u\|^2_{L^2} .
\ee
For $\rho^2 \wt\la_k\le 1$ and $\frac{J_{\min}}{J_{\max}}>\frac12$,
the left-hand side of the inequality above is positive for all $u\in\wt L\setminus\{0\}$.
In particular, $I$ is injective on $\wt L$. 

Define $L=I(\wt L)\subset C^{0,1}(M,E)$. Since $I|_{\wt L}$ is injective,
we have $\dim L=\dim\wt L=k$.
Now the min-max formula \eqref{e:minmax-smooth}, Lemma \ref{l:nablaIu-upper-bound} and \eqref{e:Iu2} imply that
\begin{eqnarray*}
\la_k^{1/2} &\le& \sup_{ u \in L\setminus\{0\}} \frac{\|\nabla u\|_{L^2}}{\|u\|_{L^2}}
= \sup_{ u \in \wt L\setminus\{0\}} \frac{\|\nabla Iu\|_{L^2}}{\|Iu\|_{L^2}} \\
&\le& \frac
{
 (1+C_n\KM\rho^2) \sqrt{\frac{n+2}{\nu_n\rho^{n+2}} D^{\rho}(u)}
  + C_nK_E\,\rho \,\|u\|_{L^2}
} 
{
\left(\frac{J_{\min}}{J_{\max}} - \frac12\rho^2 \wt\la_k \right)^{\frac12} \|u\|_{L^2}
}.
\end{eqnarray*}
This and \eqref{e:Dlewtla} imply that
\be\label{e:lak12}
 \la_k^{1/2}
 \le \frac
  { (1+C_n\KM\rho^2) \wt\la_k^{1/2}   + C_nK_E\,\rho } 
  { \left(\frac{J_{\min}}{J_{\max}} - \frac12\rho^2 \wt\la_k \right)^{\frac12} } .
\ee
Using the Jacobian estimate \eqref{e:rauch}, one sees that
$\frac{J_{\min}}{J_{\max}} \ge 1-C_n\KM\rho^2$,
which implies that
$$
 \left(\frac{J_{\min}}{J_{\max}} - \frac12\rho^2 \wt\la_k \right)^{-\frac12}
 \le 1 + C_n\KM\rho^2 + \wt\la_k \rho^2 .
$$
Then the proposition follows.
\end{proof}

\begin{proof}[Proof of Theorem \ref{t:smoothcase}]
The estimate directly follows from Proposition \ref{p:wtla-upper-bound} and Proposition \ref{p:wtla-lower-bound}, after converting all error terms involving $\til\lambda_k$ to $\lambda_k$ by using Proposition \ref{p:wtla-upper-bound}.
\end{proof}

\section{Discretization of the connection Laplacian}
\label{sec:graph}

In this section we prove Theorem \ref{t:graphcase}. Let $M^n$ be a compact, connected Riemannian manifold of dimension $n$ without boundary, and let $E$ be a smooth Euclidean (or Hermitian) vector bundle over $M$ equipped with a smooth Euclidean (or Hermitian) connection $\nabla$. Suppose $P=\{P_{xy}\}$ is the $\rho$-connection given by the parallel transport canonically associated with the connection $\nabla$. Recall that $P$ is unitary and symmetric.
Let $\Gamma=(X_{\varepsilon},d|_{X_{\varepsilon}},\mu)$ (short for $\Gamma_{\ep}$) be the discrete metric-measure space defined in Definition \ref{def-graph}, where $X_{\varepsilon}=\{x_i\}_{i=1}^N$ is a finite $\varepsilon$-net in $M$ for $\varepsilon\ll\rho$. 
We consider the $\rho$-connection Laplacian \eqref{e:mm-laplace} on this discrete metric-measure space $\Gamma$, acting on the restriction of the vector bundle $E$ onto $X_{\varepsilon}$. 

The vector bundle $E$ restricted onto $X_{\varepsilon}$ is equipped with the norm
\begin{equation}\label{discrete-norm}
\|\bar{u}\|_{\Gamma}^2=\sum_{i=1}^N \mu_i |\bar{u}(x_i)|^2,
\end{equation}
for $\bar{u}\in L^2(X_{\varepsilon},E|_{X_{\varepsilon}})$. Choosing the weights $\alpha(x)=1$ and
$\beta(x,y)=\frac{2(n+2)}{\nu_n\rho^{n+2}}$ in \eqref{e:mm-laplace} the same as in the case of smooth connections, the graph connection Laplacian $\Delta^{\rho}_{\Gamma}$ is given by
\begin{equation}\label{graph-laplacian}
\Delta^{\rho}_{\Gamma} \bar{u}(x_i)=\frac{2(n+2)}{\nu_n \rho^{n+2}} \sum_{d(x_i,x_j)<\rho} \mu_j \big(\bar{u}(x_i)-P_{x_i x_j}\bar{u}(x_j)\big),
\end{equation}
and its energy \eqref{def-energy} is given by
\begin{equation}\label{def-discrete-energy}
\|\delta \bar{u}\|^2:=\frac{n+2}{\nu_n \rho^{n+2}}\sum_i\sum_{j: d(x_i,x_j)<\rho} \mu_i \mu_j \big|\bar{u}(x_i)-P_{x_i x_j}\bar{u}(x_j) \big|^2.
\end{equation}
Denote by $\til\lambda_k(\Gamma)$ the $k$-th eigenvalue of $\Delta_{\Gamma}^{\rho}$. Our goal is to prove that $\til\lambda_k(\Gamma)$ approximates the eigenvalue $\lambda_k$ of the connection Laplacian $\Delta$ for every $k$, as $\rho+\frac{\varepsilon}{\rho}\to 0$. In the light of what we have already done in Section \ref{sec:smooth}, we only need to obtain a few more estimates.

\medskip
For the upper bound for $\til\lambda_k(\Gamma)$, we follow Section $4$ in \cite{BIK1} and define the following discretization operator $Q: L^2(M,E) \to L^2(X_{\varepsilon},E|_{X_{\varepsilon}})$ by
\begin{equation}\label{def-Q}
Qu(x_i)=\frac{1}{\mu_i}\int_{V_i} P_{x_i y}u(y)\,dy.
\end{equation}
Define an extension operator $Q^{\ast}: L^2(X_{\varepsilon},E|_{X_{\varepsilon}}) \to L^2(M,E)$ by
\begin{equation}\label{def-Qstar}
Q^{\ast}\bar{u}(y)=\sum_{i=1}^{N} P_{y x_i}\bar{u}(x_i) 1_{V_i}(y),
\end{equation}
where $1_{V_i}$ denotes the characteristic function of the set $V_i$. 
Note that $Q\circ Q^{\ast}=Id_{L^2(X_{\varepsilon},E|_{X_{\varepsilon}})}$.
The energy of $Qu$ for $u\in L^2(M,E)$ is given by
\begin{equation}\label{discrete-energy-Qu}
\|\delta(Qu)\|^2=\frac{n+2}{\nu_n \rho^{n+2}}\sum_i\sum_{j: d(x_i,x_j)<\rho} \mu_i \mu_j \big|Qu(x_i)-P_{x_i x_j}Qu(x_j) \big|^2.
\end{equation}

To control the upper bound for $\til\lambda_k(\Gamma)$, we need to estimate $\|Qu\|_{\Gamma}$ and $\|\delta(Qu)\|$. We start with the following lemma as an application of Lemma \ref{l:parallel}.

\begin{lemma}\label{changingpath}
Let $\rho<r_{inj}(M)/2$, $\varepsilon<\rho/4$, and $x_i,x_j,y,z\in M$ be given satisfying $d(x_i,x_j)<\rho,\,d(y,x_i)<\ep,\, d(z,x_j)<\ep$. Then for any $v\in E_z$, we have
$$\big|P_{y x_i} P_{x_i z}v-P_{yz}v \big|\leq K_E (\rho+2\ep) \ep |v|,$$
and
$$\big| P_{y x_i}P_{x_i x_j}P_{x_j z}v -P_{yz}v \big|\leq 2 K_E (\rho+2\ep)\varepsilon |v|.$$
\end{lemma}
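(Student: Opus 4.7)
Both inequalities compare two parallel transports from $E_z$ to $E_y$ along different broken geodesics. The strategy is to construct in each case a smooth one-parameter family $\gamma_s$ of broken geodesics from $z$ to $y$, $s\in[0,1]$, that collapses to the direct minimizing geodesic $[zy]$ at $s=0$ and equals the prescribed broken path at $s=1$. Then the $s$-derivative of $P_{\gamma_s}(v)\in E_y$ (a derivative in a fixed vector space) is controlled by applying Lemma \ref{l:parallel} segment by segment, and integrating over $s\in[0,1]$ yields the claimed bound. Throughout, the hypothesis $\rho<r_{inj}(M)/2$ together with $\ep\ll\rho$ ensures that all geodesics appearing lie in a region free of focal points, so the Jacobi field estimates below are well-defined.

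For the first inequality, let $x_i(s)$ be the point at affine parameter $s$ on the minimizing geodesic from $y$ to $x_i$, so $x_i(0)=y$ and $x_i(1)=x_i$, and set $\gamma_s$ to be the broken geodesic $z\to x_i(s)\to y$. Then $P_{\gamma_0}(v)=P_{yz}(v)$ and $P_{\gamma_1}(v)=P_{yx_i}P_{x_iz}(v)$. Using $d(z,x_i)\le d(z,x_j)+d(x_j,x_i)<\rho+\ep$, the total length telescopes to
$$
 \mathrm{length}(\gamma_s) = d(z,x_i(s))+d(x_i(s),y)\le (\rho+(2-s)\ep)+s\ep = \rho+2\ep
$$
uniformly in $s$. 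On each smooth segment, $d\gamma_s(t)/ds$ is a Jacobi field that either vanishes at the fixed endpoint and equals $dx_i(s)/ds$ at the moving one, or vice versa; since $|dx_i(s)/ds|=d(y,x_i)<\ep$ and $|J|$ is monotone along geodesics with $|J|$ vanishing at one end (before the first focal point), we have $\sup_t|d\gamma_s(t)/ds|\le\ep$ on each segment. Applying Lemma \ref{l:parallel} segment-by-segment then gives $|\tfrac{d}{ds}P_{\gamma_s}(v)|\le |v|\,K_E\,\mathrm{length}(\gamma_s)\,\ep\le K_E(\rho+2\ep)\ep\,|v|$, and integration over $s\in[0,1]$ yields the first inequality.

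For the second inequality, proceed identically with a three-segment family. Let $x_j(s)$, $s\in[0,1]$, be the affine parametrization of the minimizing geodesic from $z$ to $x_j$, and let $\gamma_s$ be the broken geodesic $z\to x_j(s)\to x_i(s)\to y$, so $\gamma_0$ again collapses to $[zy]$ while $P_{\gamma_1}(v)=P_{yx_i}P_{x_ix_j}P_{x_jz}(v)$. The crucial bookkeeping is the length bound
$$
 \mathrm{length}(\gamma_s)\le s\ep + \bigl(\rho+2(1-s)\ep\bigr) + s\ep = \rho+2\ep,
$$
uniform in $s$, where the middle term uses $d(x_j(s),x_i(s))\le d(x_j(s),x_j)+d(x_j,x_i)+d(x_i,x_i(s))$. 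On the outer two segments the Jacobi field argument is as before; on the middle segment both endpoints move at rate $\le\ep$, and the standard two-point boundary value bound for Jacobi fields on geodesics shorter than the injectivity radius gives $\sup_t|d\gamma_s(t)/ds|\le\ep$ up to a $1+O(K_M\rho^2)$ factor that can be absorbed. Applying Lemma \ref{l:parallel} segment-by-segment and integrating yields the second claim.

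The main technical point is the segment-wise interpretation of $\frac{d}{ds}P_{\gamma_s}(v)$ for a broken family: the intermediate fibers $E_{x_i(s)}$ and $E_{x_j(s)}$ vary with $s$, so one must view the composite as $P_{y,x_i(s)}\circ W(s)$ with $W(s)$ living in a moving fiber, and identify the derivative of the composite (valued in the fixed fiber $E_y$) with $\nabla_s W(s)$ via a parallel frame along $s\mapsto x_i(s)$. Once this identification is made, the geometric content reduces to the length bound $\mathrm{length}(\gamma_s)\le\rho+2\ep$ and the Jacobi field bound $\sup_t|d\gamma_s(t)/ds|\le\ep$, both of which are immediate from the hypotheses.
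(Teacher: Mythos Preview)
Your plan is essentially correct and close in spirit to the paper's argument, but the organization differs. For the first inequality the paper does not build a broken family explicitly: it uses the radial section $V_{z,v}(x)=P_{xz}(v)$, walks along the geodesic $[y\,x_i]$ (arc-length parameter), and bounds the derivative of the resulting $E_y$-valued curve by the pointwise estimate $|\nabla_w V_{z,v}(x)|\le K_E\,d(x,z)\,|v|$ (their equation~\eqref{error}), then integrates over an interval of length $d(y,x_i)<\ep$. Unwinding, this is exactly your broken family $z\to x_i(s)\to y$ with the observation that the second leg contributes nothing (because $P_{y,x_i(s)}$ coincides with parallel transport along the $s$-curve $x_i(\cdot)$ itself). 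Your ``segment-by-segment'' phrasing is thus a mild overcount of the length, but still gives the stated bound.

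For the second inequality the paper takes a \emph{single}-segment family: geodesics $\gamma_s$ interpolating between $[zy]$ and $[x_jx_i]$ with both endpoints moving and $v_s$ parallel along the starting curve $[z\,x_j]$ (so $\nabla_s v_s=0$), then invokes the same mechanism as in Lemma~\ref{l:parallel}. Your three-segment family works too, but is bulkier. Either way one must control $\sup_t|\partial_s\gamma_s|$ on a segment with \emph{two} moving endpoints, and this is where a $(1+O(K_M\rho^2))$ factor creeps in; you flag it honestly, the paper glosses over it. Strictly speaking neither argument yields the exact constant $K_E(\rho+2\ep)\ep$ without that factor, but every downstream use of the lemma absorbs it into a $C_n$, so this is harmless.

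One small correction: your justification ``$|J|$ is monotone along geodesics with $|J|$ vanishing at one end (before the first focal point)'' is not quite the right hypothesis---on a sphere $|J|=\sin t$ decreases after $\pi/2$, well before the conjugate point at~$\pi$. What you need (and what the paper uses elsewhere) is that the segment length is below the convexity radius, which follows from $\rho<r_{\mathrm{inj}}/2$ together with the standing bound $K_M^{1/2}\rho<c_n$.
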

\begin{proof}
Let $V_{z,v}(x)=P_{xz}(v)$, and let $\gamma_{y,x_i}: [0,d(y,x_i)] \to M$ be the unique minimizing geodesic from $y$ to $x_i$ with arclength parametrization. By definition,
$$\nabla_s V_{z,v}\big(\gamma_{y,x_i}(s)\big)=\lim_{s'\to 0}\frac{P_{s,s+s'}V_{z,v}\big(\gamma_{y,x_i}(s+s')\big)-V_{z,v}\big(\gamma_{y,x_i}(s)\big)}{s'},$$
where $P_{s,s+s'}$ denotes the parallel transport from $\gamma_{y,x_i}(s+s')$ to $\gamma_{y,x_i}(s)$ along the geodesic $\gamma_{y,x_i}$. Apply $P_{0,s}$ to both sides:
$$P_{0,s}\Big(\nabla_s V_{z,v}\big(\gamma_{y,x_i}(s)\big)\Big)=\lim_{s'\to 0}\frac{P_{0,s+s'}V_{z,v}\big(\gamma_{y,x_i}(s+s')\big)-P_{0,s} V_{z,v}\big(\gamma_{y,x_i}(s)\big)}{s'}.$$
Observe that $P_{0,\cdot} V_{z,v}\big(\gamma_{y,x_i}(\cdot)\big)$ is a curve in $E_y$. Since $P$ is unitary, the formula above shows that the tangent vectors of this $E_y$-curve have lengths bounded by $K_E (\rho+2\ep)|v|$ due to \eqref{error}. Thus,
$$\big|P_{y x_i}V_{z,v}(x_i)-V_{z,v}(y) \big|=\Big|\int_0^{d(x_i,y)} P_{0,s}\Big(\nabla_s V_{z,v}\big(\gamma_{y,x_i}(s)\big)\Big) \,ds\, \Big| \leq K_E (\rho+2\ep)\ep |v|.$$
Then the first conclusion directly follows from the definition $V_{z,v}(x)=P_{xz}(v)$.

\smallskip
The second conclusion can be derived using the first conclusion. Namely, we apply the first conclusion with $x_j,z,y$ in place of $y,x_i,z$, and $P_{yz}v$ in place of $v$:
$$\big| P_{x_jz} P_{zy} (P_{yz}v)-P_{x_j y}(P_{yz}v) \big| \leq K_E (\rho+2\varepsilon)\varepsilon |P_{yz}v|=  K_E (\rho+2\varepsilon)\varepsilon |v|.$$
Since $P$ is symmetric and unitary, the inequality above is equivalent to
\begin{equation} \label{eq-loop1}
\big| P_{y x_j}P_{x_jz} v-P_{yz}v \big| \leq   K_E (\rho+2\varepsilon)\varepsilon |v|.
\end{equation}
Applying the first conclusion with $y,x_i,x_j$ in place of $y,x_i,z$, and $P_{x_j z}v$ in place of $v$ gives
\begin{equation}\label{eq-loop2}
\big| P_{y x_i}P_{x_i x_j} (P_{x_j z}v)-P_{y x_j} (P_{x_j z} v) \big| \leq   K_E (\rho+2\varepsilon)\varepsilon |P_{x_j z}v|=K_E (\rho+2\varepsilon)\varepsilon |v|.
\end{equation}
Thus the second conclusion follows from \eqref{eq-loop1}, \eqref{eq-loop2} and the triangle inequality.
\end{proof}

The following two lemmas enable us to obtain an upper bound for $\til\lambda_k(\Gamma)$.

\begin{lemma}\label{discrete-L2}
For any $u\in L^2(M,E)$, we have 
$$\big| \|u\|_{L^2}-\|Qu\|_{\Gamma} \big|^2 \leq\frac{C}{\nu_n(\rho-\varepsilon)^n}D^{\rho}(u)+C K_E^2\rho^4\|u\|_{L^2}^2.$$
\end{lemma}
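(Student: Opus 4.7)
The plan is to first show $\|Qu\|_\Gamma \le \|u\|_{L^2}$, so that $\bigl|\|u\|_{L^2}-\|Qu\|_\Gamma\bigr|^2 \le \|u\|_{L^2}^2 - \|Qu\|_\Gamma^2$ (using $(a-b)^2\le a^2-b^2$ when $a\ge b\ge 0$), and then bound the right-hand side. Applying Cauchy--Schwarz cell by cell to the definition \eqref{def-Q} and using that $P_{x_iy}$ is unitary, one gets $\mu_i|Qu(x_i)|^2\le \int_{V_i}|u(y)|^2\,dy$; summing over $i$ yields the one-sided bound. The standard variance identity then gives
$$
  \|u\|_{L^2}^2-\|Qu\|_\Gamma^2
  = \sum_i \frac{1}{2\mu_i}\iint_{V_i\times V_i}\big|P_{x_iy}u(y)-P_{x_iz}u(z)\big|^2\,dy\,dz,
$$
so the task reduces to estimating the right-hand side in terms of $D^\rho(u)$ and $\|u\|_{L^2}^2$.

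To control the integrand, I would use the decomposition $P_{x_iy}u(y)-P_{x_iz}u(z) = P_{x_iy}\Gamma_{yz}(u) + \bigl(P_{x_iy}P_{yz}-P_{x_iz}\bigr)u(z)$, and estimate the second (holonomy) term via Lemma \ref{changingpath} (applied with $x_i=x_j$), giving a bound $\le CK_E\varepsilon^2|u(z)|$. After squaring and using $|a+b|^2\le 2|a|^2+2|b|^2$, the cell contribution splits as
$$
  \int_{V_i}|u|^2\,dy-\mu_i|Qu(x_i)|^2
  \le \frac{1}{\mu_i}\iint_{V_i\times V_i}|\Gamma_{yz}(u)|^2\,dy\,dz+CK_E^2\varepsilon^4\int_{V_i}|u|^2\,dz.
$$

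The main step is to bound $\sum_i \frac{1}{\mu_i}\iint_{V_i\times V_i}|\Gamma_{yz}(u)|^2\,dy\,dz$ by $\frac{C}{\nu_n(\rho-\varepsilon)^n}D^\rho(u)$ plus an $\|u\|_{L^2}^2$ error. For this, I would introduce an auxiliary point $x\in B_{\rho-\varepsilon}(x_i)$ and use the identity $\Gamma_{yz}(u)=\Gamma_{yx}(u)-P_{yz}\Gamma_{zx}(u)+(P_{yx}-P_{yz}P_{zx})u(x)$; again Lemma \ref{changingpath} handles the holonomy error by $CK_E\rho^2|u(x)|$, giving
$$
  |\Gamma_{yz}(u)|^2\le 3|\Gamma_{xy}(u)|^2+3|\Gamma_{xz}(u)|^2+CK_E^2\rho^4|u(x)|^2,
$$
where I used that $|\Gamma_{yx}|=|\Gamma_{xy}|$ since $P$ is unitary and symmetric. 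Averaging over $x\in B_{\rho-\varepsilon}(x_i)$ (which has volume $\gtrsim\nu_n(\rho-\varepsilon)^n$ and is contained in $B_\rho(y)\cap B_\rho(z)$ for all $y,z\in V_i$) and then integrating over $y,z\in V_i$ produces, after dividing by $\mu_i$, a bound involving $\int_{V_i}\int_{B_\rho(y)}|\Gamma_{xy}(u)|^2\,dx\,dy$.

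The final step is the sum over $i$: because $\{V_i\}$ partitions $M$, the main term assembles to $\int_M\int_{B_\rho(y)}|\Gamma_{xy}(u)|^2\,dx\,dy=D^\rho(u)$, yielding the $\frac{C}{\nu_n(\rho-\varepsilon)^n}D^\rho(u)$ contribution. For the curvature term, the overlap multiplicity of $\{B_{\rho-\varepsilon}(x_i)\}$ is $O((\rho/\varepsilon)^n)$, which combines with the prefactor $\mu_i/\vol(B_{\rho-\varepsilon}(x_i))=O(\varepsilon^n/(\rho-\varepsilon)^n)$ to give an $O(1)$ coefficient, producing a total $CK_E^2\rho^4\|u\|_{L^2}^2$ error (which dominates the $CK_E^2\varepsilon^4\|u\|_{L^2}^2$ contribution from the first step since $\varepsilon<\rho$). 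The main obstacle is the bookkeeping of these two holonomy estimates and making sure the overlap/volume ratios cancel cleanly; once this is in place, combining everything and taking the square root via $(a-b)^2\le a^2-b^2$ yields the stated inequality.
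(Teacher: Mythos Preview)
Your argument is correct and follows essentially the same strategy as the paper's proof. In fact, your variance identity $\|u\|_{L^2}^2-\|Qu\|_\Gamma^2=\sum_i\frac{1}{2\mu_i}\iint_{V_i\times V_i}|P_{x_iy}u(y)-P_{x_iz}u(z)|^2$ is exactly $\|u-Q^*Qu\|_{L^2}^2$, which is how the paper begins (via the reverse triangle inequality $|\,\|u\|-\|Qu\|_\Gamma\,|\le\|u-Q^*Qu\|$, using that $Q^*$ is an isometry); the paper then applies Cauchy--Schwarz to reach the same double integral up to a factor of~$2$. The auxiliary-point step is also the same idea: the paper averages over $z\in B_\rho(x)\cap B_\rho(y)$ (which contains a $(\rho-\varepsilon)$-ball), just as you average over $B_{\rho-\varepsilon}(x_i)$.

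One simplification worth noting: the paper writes the triangle/holonomy decomposition so that the curvature error is attached to $|u(y)|$ with $y\in V_i$ (namely $|u(x)-P_{xy}u(y)|\le |\Gamma_{xz}(u)|+|\Gamma_{yz}(u)|+K_E\rho^2|u(y)|$), so that after integrating over $V_i\times V_i$ and summing over $i$ the error becomes $CK_E^2\rho^4\|u\|_{L^2}^2$ immediately, with no overlap bookkeeping. Your version attaches the error to the auxiliary point $|u(x)|$, forcing the covering-multiplicity argument; this works, but is avoidable by rewriting the identity as $\Gamma_{yz}(u)=\Gamma_{yx}(u)+P_{yx}\Gamma_{xz}(u)+(P_{yx}P_{xz}-P_{yz})u(z)$ so the holonomy term carries $u(z)$ with $z\in V_i$.
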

\begin{proof}
Observe that $Q^{\ast}$ preserves the norms. Hence,
$$\big| \|u\|_{L^2}-\|Qu\|_{\Gamma} \big|= \big| \|u\|_{L^2}-\|Q^{\ast}Qu\|_{L^2} \big| \leq \|u-Q^{\ast}Qu\|_{L^2}.$$
By the definitions of $Q$ and $Q^{\ast}$,
\begin{eqnarray*}
\|u-Q^{\ast}Qu\|_{L^2}^2 &=& \sum_{i=1}^N \int_{V_i} \big|u(x)-P_{x x_i}Qu(x_i) \big|^2 dx \\
&=& \sum_{i=1}^N \int_{V_i} \Big|u(x)-\frac{1}{\mu_i}\int_{V_i} P_{x x_i}P_{x_i y}u(y)\, dy \Big|^2 dx.
\end{eqnarray*}
By the Cauchy-Schwarz inequality, we have
$$\|u-Q^{\ast}Qu\|_{L^2}^2 \leq \sum_{i=1}^N \frac{1}{\mu_i} \int_{V_i}\int_{V_i} \big|u(x)-P_{x x_i}P_{x_i y}u(y) \big|^2 dx dy. $$
Since $V_i\subset B_{\varepsilon}(x_i)$, then Lemma \ref{changingpath} yields that
\begin{eqnarray*}
\|u-Q^{\ast}Qu\|_{L^2}^2 \leq \sum_{i=1}^N \frac{1}{\mu_i} \int_{V_i}\int_{V_i} \big(|u(x)-P_{xy}u(y)|+2K_E\varepsilon^2 |u(y)| \big)^2 dxdy .
\end{eqnarray*}

To deal with the first term, we follow the proof of Lemma $3.4$ in \cite{BIK1}. We fix $x,y\in V_i$ and consider the set $U=B_{\rho}(x)\cap B_{\rho}(y)$. Observe that $U$ contains the ball of radius $\rho-|xy|/2\geq\rho-\varepsilon$ centered at the midpoint between $x$ and $y$. 
Hence we have $\textrm{vol}(U)\geq C \nu_n (\rho-\varepsilon)^n$ by \eqref{e:rauch}.
Recall that $P$ is unitary and symmetric. Then for every $z\in U$, we have 
\begin{eqnarray*}
|u(x)-P_{xy}u(y)| &\leq& |u(x)-P_{xz}u(z)|+|P_{xz}u(z)-P_{xy}u(y)| \\
&=& |u(x)-P_{xz}u(z)|+|u(z)-P_{zx}P_{xy}u(y)| \\
&\leq& |u(x)-P_{xz}u(z)|+|u(z)-P_{zy}u(y)|+K_E\rho^2 |u(y)|,
\end{eqnarray*}
where we applied Lemma \ref{changingpath} in the last inequality. 
Then
\begin{eqnarray*}
|u(x)-P_{xy}u(y)|^2 &\leq& \frac{C}{\vol(U)} \int_{U} \Big(|u(x)-P_{xz}u(z)|^2+|u(y)-P_{yz}u(z)|^2+K_E^2 \rho^4 |u(y)|^2 \Big) dz \\
&\leq& \frac{C}{\vol(U)}\big(F(x)+F(y)\big) +C K_E^2 \rho^4 |u(y)|^2,
\end{eqnarray*}
where $F(x)=\int_{B_{\rho}(x)}|u(x)-P_{xz}u(z)|^2 dz$. Hence by definition \eqref{e:defD}, we obtain
\begin{eqnarray*}
\|u-Q^{\ast}Qu\|_{L^2}^2 &\leq& \sum_{i=1}^N \frac{C}{\mu_i} \int_{V_i}\int_{V_i} \frac{1}{\vol(U)}\big(F(x)+F(y)\big)dxdy + CK_E^2 \rho^4 \|u\|_{L^2}^2 \\
&=& \frac{C}{\vol(U)} D^{\rho}(u) +CK_E^2 \rho^4 \|u\|_{L^2}^2 \leq \frac{C}{\nu_n(\rho-\ep)^n} D^{\rho}(u) +CK_E^2 \rho^4 \|u\|_{L^2}^2.
\end{eqnarray*}
\end{proof}

\begin{lemma}\label{discrete-energy}
For any $u\in L^2(M,E)$, we have 
$$\|\delta(Qu)\|^2\leq \frac{n+2}{\nu_n \rho^{n+2}}(1+4\rho^2) D^{\rho+2\varepsilon}(u)+ C_n K_E^2 (\frac{\varepsilon}{\rho})^2 \|u\|_{L^2}^2.$$
\end{lemma}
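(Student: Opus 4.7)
The plan is to unfold the discrete energy, express each difference $Qu(x_i) - P_{x_ix_j}Qu(x_j)$ as a double average over $V_i \times V_j$ of terms of the form $P_{x_iy}u(y) - P_{x_ix_j}P_{x_jz}u(z)$, and then reduce these to the continuous pointwise differences $u(y) - P_{yz}u(z)$ which appear in $D^{\rho+2\ep}(u)$. The first step is algebraic: using the definition of $Q$ from \eqref{def-Q},
$$
Qu(x_i) - P_{x_ix_j}Qu(x_j) = \frac{1}{\mu_i\mu_j}\iint_{V_i\times V_j}\bigl(P_{x_iy}u(y) - P_{x_ix_j}P_{x_jz}u(z)\bigr)\,dy\,dz,
$$
and by the unitarity and symmetry of $P$, the norm of the integrand equals $\bigl|u(y) - P_{yx_i}P_{x_ix_j}P_{x_jz}u(z)\bigr|$.

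Next, I would add and subtract $P_{yz}u(z)$ and invoke the second inequality of Lemma \ref{changingpath} to obtain the pointwise bound
$$
\bigl|P_{x_iy}u(y) - P_{x_ix_j}P_{x_jz}u(z)\bigr| \le \bigl|u(y) - P_{yz}u(z)\bigr| + K_E(\rho+2\ep)\ep\,|u(z)|.
$$
After Cauchy-Schwarz on the $V_i\times V_j$ average and the elementary splitting $(a+b)^2 \le (1+2\rho^2)a^2 + (1 + (2\rho^2)^{-1})b^2$, this converts the squared difference into a main term with prefactor $(1+2\rho^2)$ plus a curvature error. The weight $2\rho^2$ is chosen precisely so that $(1+(2\rho^2)^{-1})(\rho+2\ep)^2$ is $O(1)$ for $\ep\lesssim\rho$, leaving only the small factor $\ep^2$ in the remainder.

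Summing the resulting inequalities over all pairs $(i,j)$ with $d(x_i,x_j)<\rho$ and weighted by $\mu_i\mu_j$, the main term is bounded by $D^{\rho+2\ep}(u)$, since for any such $(y,z)\in V_i\times V_j$ one has $d(y,z) < \rho + 2\ep$ by the triangle inequality. For the error term I would use the packing estimate $\sum_{i:d(x_i,x_j)<\rho}\mu_i \le \vol(B_{\rho+\ep}(x_j)) \le C_n\rho^n$ together with the $V_j$-partition to produce a bound $C_nK_E^2\ep^2\rho^n\|u\|_{L^2}^2$. Multiplying by the overall prefactor $\frac{n+2}{\nu_n\rho^{n+2}}$ from \eqref{discrete-energy-Qu} gives the claimed bound, with the error collapsing to $C_nK_E^2(\ep/\rho)^2\|u\|_{L^2}^2$. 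The main obstacle is the composition-of-transports estimate, which is already packaged in Lemma \ref{changingpath}; the only subtlety that remains is calibrating the weights in the $(a+b)^2$ inequality so that the small $\ep^2$ factor survives without picking up extra negative powers of $\rho$ beyond the two already present in the statement. Everything else is routine bookkeeping with the $\ep$-net geometry.
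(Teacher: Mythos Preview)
Your proposal is correct and follows essentially the same route as the paper's proof: write the difference as a $V_i\times V_j$ average, apply Cauchy--Schwarz, use the second inequality of Lemma~\ref{changingpath} to replace the broken transport by $P_{yz}$ at cost $K_E(\rho+2\ep)\ep|u(z)|$, split via a weighted Young inequality to isolate the $(1+2\rho^2)$ factor, and then sum using $V_i\times V_j\subset\{(y,z):d(y,z)<\rho+2\ep\}$ for the main term and the packing bound $\sum_{i:d(x_i,x_j)<\rho}\mu_i\le C_n\rho^n$ for the error. The only cosmetic difference is that the paper expands $(a+b)^2$ and bounds the cross term $2ab\le \rho^2 a^2+\rho^{-2}b^2$ rather than quoting the Young splitting directly, and it absorbs $\rho+2\ep\le 2\rho$ at the outset.
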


\begin{proof}
The definition of $Q$ yields that
$$Qu(x_i)-P_{x_i x_j}Qu(x_j)=\frac{1}{\mu_i \mu_j}\int_{V_i}\int_{V_j} \big( P_{x_i y}u(y)-P_{x_i x_j}P_{x_j z}u(z)\big) dy dz.$$
Then by the Cauchy-Schwarz inequality and the fact that $P$ is unitary and symmetric,
\begin{eqnarray*}
\big|Qu(x_i)-P_{x_i x_j}Qu(x_j)\big|^2&\leq&\frac{1}{\mu_i \mu_j}\int_{V_i}\int_{V_j} \big| P_{x_i y}u(y)-P_{x_i x_j}P_{x_j z}u(z) \big|^2 dy dz\\
&=& \frac{1}{\mu_i \mu_j}\int_{V_i}\int_{V_j} \big| u(y)-P_{y x_i}P_{x_i x_j}P_{x_j z}u(z) \big|^2 dy dz.
\end{eqnarray*}
The parallel transport appeared in the quantity above goes through the path $[z x_j x_i y]$, while what we need is to go through the minimizing geodesic $[zy]$. Thus \eqref{discrete-energy-Qu} and Lemma \ref{changingpath} imply that
\begin{eqnarray*}
\|\delta(Qu)\|^2 &\leq& \frac{n+2}{\nu_n \rho^{n+2}} \sum_i\sum_{j: d(x_i,x_j)<\rho} \int_{V_i}\int_{V_j} \big| u(y)-P_{y x_i}P_{x_i x_j}P_{x_j z}u(z) \big|^2 dy dz \\
&\leq& \frac{n+2}{\nu_n \rho^{n+2}} \sum_i\sum_{j: d(x_i,x_j)<\rho} \int_{V_i}\int_{V_j} \big(| u(y)-P_{yz}u(z) |+ 4 K_E\rho\varepsilon |u(z)| \big)^2 dy dz \\
&\leq& \frac{n+2}{\nu_n \rho^{n+2}} \sum_i\sum_{j: d(x_i,x_j)<\rho} \int_{V_i}\int_{V_j} \Big((1+4\rho^2)| u(y)-P_{yz}u(z) |^2+CK_E^2\varepsilon^2|u(z)|^2 \Big) dy dz.
\end{eqnarray*}
Here the last inequality above used the inequality that 
$$2K_E\rho\varepsilon |u(z)| \cdot | u(y)-P_{yz}u(z) | \leq \rho^2| u(y)-P_{yz}u(z) |^2+K_E^2\varepsilon^2|u(z)|^2.$$ 
Since $\bigcup_{j: d(x_i, x_j)<\rho}V_j \subset B_{\rho+2\varepsilon}(y)$ for $y\in V_i$, we have
\begin{eqnarray*}
\|\delta(Qu)\|^2 &\leq& \frac{n+2}{\nu_n \rho^{n+2}}(1+4\rho^2) \int_{M}\int_{B_{\rho+2\varepsilon}(y)} | u(y)-P_{yz}u(z) |^2 dydz +C_n K_E^2 \frac{\varepsilon^2 (\rho+2\varepsilon)^n}{\rho^{n+2}}\|u\|_{L^2}^2 \\
&\leq& \frac{n+2}{\nu_n \rho^{n+2}}(1+4\rho^2) D^{\rho+2\varepsilon}(u)+ C_n K_E^2(\frac{\varepsilon}{\rho})^2\|u\|_{L^2}^2.
\end{eqnarray*}
\end{proof}

The lower bound for $\til\lambda_k(\Gamma)$ almost immediately follows from Lemma \ref{l:Iu-lower-bound} and Lemma \ref{l:nablaIu-upper-bound}, since these two lemmas hold for any $L^2$ section. For any $\bar{u}\in L^2(X_{\varepsilon},E|_{X_{\varepsilon}})$, we consider $Q^{\ast}\bar{u} \in L^2(M,E)$ and apply those two lemmas to $Q^{\ast}\bar{u}$. Recall that $\|Q^{\ast}\bar{u}\|_{L^2}^2=\|\bar{u}\|^2_{\Gamma}$. The only part left is to estimate $D^{\rho}(Q^{\ast}\bar{u})$ in terms of $\|\delta \bar{u}\|^2$.

\begin{lemma}\label{discrete-lower}
For any $\bar{u}\in L^2(X_{\varepsilon},E|_{X_{\varepsilon}})$, we have
$$D^{\rho-2\varepsilon}(Q^{\ast}\bar{u}) \leq \frac{\nu_n \rho^{n+2}}{n+2}(1+4\rho^2) \|\delta\bar{u}\|^2 + C_n K_E^2\varepsilon^2(\rho+\varepsilon)^n \|\bar{u}\|_{\Gamma}^2.$$
\end{lemma}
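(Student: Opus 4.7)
The proof will be a discrete analogue of Lemma \ref{discrete-energy}, exchanging the roles of the continuous and discrete sides. The plan is to unfold $D^{\rho-2\ep}(Q^{\ast}\bar u)$ by partitioning the outer integrals along the Voronoi-like cells $\{V_i\}$, use Lemma \ref{changingpath} to swap the parallel transport along $[yz]$ for the one along $[y x_i x_j z]$, and then apply the same AM-GM splitting trick that was used in Lemma \ref{discrete-energy}.

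\medskip

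First, I would write
$$
 D^{\rho-2\ep}(Q^{\ast}\bar u)
 = \sum_i \sum_j \int_{V_i}\int_{V_j\cap B_{\rho-2\ep}(y)}
 \bigl|P_{y x_i}\bar u(x_i) - P_{yz}P_{z x_j}\bar u(x_j)\bigr|^2 \, dz\, dy ,
$$
using that $Q^{\ast}\bar u(y)=P_{y x_i}\bar u(x_i)$ on $V_i$ and $Q^{\ast}\bar u(z)=P_{z x_j}\bar u(x_j)$ on $V_j$. Since $y\in V_i\subset B_\ep(x_i)$ and $z\in V_j\subset B_\ep(x_j)$, the condition $d(y,z)<\rho-2\ep$ forces $d(x_i,x_j)<\rho$, so the inner summation is over precisely the pairs with $d(x_i,x_j)<\rho$.

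\medskip

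Next, I would compare the two transports by inserting $P_{y x_i}P_{x_i x_j}\bar u(x_j)$ in the middle. Since $P$ is symmetric and unitary, $P_{x_j z} P_{z x_j}=\mathrm{Id}$, so the second inequality of Lemma \ref{changingpath}, applied to $v:=P_{z x_j}\bar u(x_j)\in E_z$, yields
$$
 \bigl|P_{y x_i}P_{x_i x_j}\bar u(x_j) - P_{yz}P_{z x_j}\bar u(x_j)\bigr|
 \le K_E(\rho+2\ep)\ep\,|\bar u(x_j)| .
$$
Combined with the triangle inequality and the fact that $P_{y x_i}$ is an isometry, this gives
$$
 \bigl|P_{y x_i}\bar u(x_i) - P_{yz}P_{z x_j}\bar u(x_j)\bigr|
 \le \bigl|\bar u(x_i)-P_{x_i x_j}\bar u(x_j)\bigr| + K_E(\rho+2\ep)\ep\,|\bar u(x_j)|.
$$
Squaring and applying $(a+b)^2\le (1+2\rho^2)a^2 + (1+\tfrac{1}{2\rho^2})b^2$, then using $\ep<\rho/4$ so that $(\rho+2\ep)^2(1+\tfrac{1}{2\rho^2})\le C_n$, we obtain
$$
 \bigl|Q^{\ast}\bar u(y)-P_{yz}Q^{\ast}\bar u(z)\bigr|^2
 \le (1+2\rho^2)\,\bigl|\bar u(x_i)-P_{x_i x_j}\bar u(x_j)\bigr|^2 + C_n K_E^2 \ep^2 |\bar u(x_j)|^2 .
$$

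\medskip

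Finally, I would integrate over $V_i\times V_j$ and sum. For the first contribution, dropping the restriction to $B_{\rho-2\ep}(y)$ and using $\vol(V_i)\vol(V_j)=\mu_i\mu_j$ recovers exactly $\frac{\nu_n\rho^{n+2}}{n+2}(1+2\rho^2)\|\delta\bar u\|^2$ in view of the definition \eqref{def-discrete-energy}. For the second contribution,
$$
 \sum_{i}\sum_{j: d(x_i,x_j)<\rho} \mu_i\mu_j\,|\bar u(x_j)|^2
 = \sum_j \mu_j|\bar u(x_j)|^2 \!\!\sum_{i: d(x_i,x_j)<\rho}\!\! \mu_i
 \le C_n(\rho+\ep)^n\,\|\bar u\|^2_\Gamma,
$$
since $\bigcup_{i:d(x_i,x_j)<\rho} V_i\subset B_{\rho+\ep}(x_j)$ and the curvature bound controls the volume of $(\rho+\ep)$-balls. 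Combining the two estimates delivers the desired inequality.

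\medskip

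The only subtle point, and where I expect to spend a bit of care, is step two: tracking which fiber each vector lives in when applying Lemma \ref{changingpath}, in particular exploiting the symmetry $P_{x_j z}P_{z x_j}=\mathrm{Id}$ to present the triple transport $P_{y x_i}P_{x_i x_j}P_{x_j z}v$ in the form the lemma demands. Everything else is routine bookkeeping.
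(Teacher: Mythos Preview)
Your proposal is correct and follows essentially the same route as the paper's proof: partition the integral over the cells $V_i$, use the inclusion forced by $d(y,z)<\rho-2\ep$ to restrict to pairs with $d(x_i,x_j)<\rho$, invoke Lemma~\ref{changingpath} to compare $P_{yz}P_{zx_j}$ with $P_{yx_i}P_{x_ix_j}$, apply the same Young-type splitting $(a+b)^2\le(1+2\rho^2)a^2+C b^2$, and finish with the volume bound $\sum_{i:d(x_i,x_j)<\rho}\mu_i\le C_n(\rho+\ep)^n$. The only cosmetic difference is that the paper first pulls $P_{yx_i}$ to the other side (writing $|\bar u(x_i)-P_{x_iy}P_{yz}P_{zx_j}\bar u(x_j)|$) before invoking the lemma, whereas you apply the lemma directly with $v=P_{zx_j}\bar u(x_j)$; your version matches the lemma's hypotheses verbatim and is arguably cleaner.
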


\begin{proof}
Since $B_{\rho-2\varepsilon}(y) \subset \bigcup_{j: d(x_i, x_j)<\rho}V_j $ for $y\in V_i$, we have
\begin{eqnarray*}
D^{\rho-2\varepsilon}(Q^{\ast}\bar{u}) &=& \int_M \int_{B_{\rho-2\varepsilon}(y)} |Q^{\ast}\bar{u}(y)-P_{yz}Q^{\ast}\bar{u}(z)|^2 dz dy \\
&\leq& \sum_i \sum_{j: d(x_i,x_j)<\rho} \int_{V_i} \int_{V_j} |Q^{\ast}\bar{u}(y)-P_{yz}Q^{\ast}\bar{u}(z)|^2 dz dy.
\end{eqnarray*}
By the definition of $Q^{\ast}$, for any $y\in V_i,\,z\in V_j$,
$$Q^{\ast}\bar{u}(y)-P_{yz}Q^{\ast}\bar{u}(z) =P_{y x_i}\bar{u}(x_i)-P_{yz}P_{z x_j}\bar{u}(x_j) .$$
Since $P$ is unitary and symmetric, Lemma \ref{changingpath} implies that
\begin{eqnarray*}
\big|Q^{\ast}\bar{u}(y)-P_{yz}Q^{\ast}\bar{u}(z)\big|^2 &=& \big|\bar{u}(x_i)-P_{x_i y}P_{yz}P_{z x_j}\bar{u}(x_j) \big|^2 \\
&\leq& \big(|\bar{u}(x_i)-P_{x_i x_j}\bar{u}(x_j)|+4K_E \rho\varepsilon |\bar{u}(x_j)| \big)^2 \\
&\leq& (1+4\rho^2) \big|\bar{u}(x_i)-P_{x_i x_j}\bar{u}(x_j) \big|^2+ CK_E^2\varepsilon^2 |\bar{u}(x_j)|^2.
\end{eqnarray*}
Integrating the last inequality over $V_i,V_j$, by definition \eqref{def-discrete-energy}, we obtain
\begin{eqnarray*}
D^{\rho-2\varepsilon}(Q^{\ast}\bar{u}) &\leq& (1+4\rho^2) \sum_{i,j} \mu_i \mu_j \big|\bar{u}(x_i)-P_{x_i x_j}\bar{u}(x_j) \big|^2 +  CK_E^2 \varepsilon^2 \sum_{i,j}\mu_i \mu_j  |\bar{u}(x_j)|^2  \\
&\leq& \frac{\nu_n \rho^{n+2}}{n+2}(1+4\rho^2) \|\delta\bar{u}\|^2 + C_n K_E^2\varepsilon^2(\rho+\varepsilon)^n \|\bar{u}\|_{\Gamma}^2,
\end{eqnarray*}
where the last inequality used the fact that $\sum_{i: d(x_i,x_j)<\rho} \mu_i \leq \vol(B_{\rho+\ep}(x_j))$.
\end{proof}

\begin{proof}[Proof of Theorem \ref{t:graphcase}]
The upper bound for $\til\lambda_k(\Gamma)$ follows from Lemma \ref{discrete-L2}, Lemma \ref{discrete-energy} and Lemma \ref{l:Du-upper}. The lower bound follows from Lemma \ref{discrete-lower}, Lemma \ref{l:Iu-lower-bound} and Lemma \ref{l:nablaIu-upper-bound}. The calculations are straightforward, similar to Proposition \ref{p:wtla-lower-bound}.
\end{proof}

\end{document}